\pgfplotsset{compat=newest} 
\pgfplotsset{plot coordinates/math parser=false} 
\newlength\figureheight 
\newlength\figurewidth 
\newcommand{\N}{\mathbb{N}}
\newcommand{\R}{\mathbb{R}}
\newcommand{\Z}{\mathbb{Z}}
\newcommand{\cC}{\mathcal{C}}
\newcommand{\cO}{\mathcal{O}}
\newcommand{\hd}{\hat d}
\newcommand{\tX}{\tilde X}
\newcommand{\tY}{\tilde Y}
\DeclareMathOperator*{\Inv}{Inv}
\DeclareMathOperator*{\argmin}{argmin}
\newtheorem{proposition}{Proposition}
\theoremstyle{definition}
\newtheorem{experiment}{Experiment}
\title{On the sighting of unicorns: a variational approach
to computing invariant sets in dynamical systems}
\author{
Oliver Junge
\footnote{Center for Mathematics, Technical University of Munich, 85747 Garching, Germany, \texttt{oj@tum.de}. Supported by the DFG Collaborative Research Center SFB/TR 109 ``Discretization in Geometry and Dynamics''.}
\and
Ioannis G.~Kevrekidis
\footnote{Department of Chemical and Biological Engineering and PACM, Princeton University, Princeton, New Jersey 08544, USA, also Institute of Advanced Studies, TU Munich, and ZIB, FU Berlin, \texttt{yannis@princeton.edu}. The work of I.G.K is also partially supported by the US National Science Foundation (CBET and CDS\&E).}
}
\date{\today}
\begin{document}

\maketitle

\begin{abstract}
We propose to compute approximations to general invariant sets in dynamical systems by minimizing the distance between an appropriately selected finite set of points and its image under the dynamics.  We demonstrate, through computational experiments that this approach can successfully converge to approximations of (maximal) invariant sets of arbitrary topology, dimension and stability as, e.g., saddle type invariant sets with complicated dynamics.  We further propose to extend this approach by adding a Lennard-Jones type potential term to the objective function which yields more evenly distributed approximating finite point sets and perform corresponding numerical experiments.
\end{abstract}

\section{Introduction}

One central question in dynamical systems theory is to understand the existence and structure of \emph{invariant sets}.  Basic and important examples for invariant sets are fixed points/equi\-libria, periodic and quasiperiodic orbits and their associated stable and unstable manifolds.  In systems with chaotic behaviour, invariant sets with complicated topology may exist.  A plethora of numerical techniques has been developed in order to approximate invariant sets computationally: Straightforward \emph{simulations} (or more generally \emph{indirect methods}) typically reveal parts of some invariant set, e.g.\ some \emph{attractor} of the system, cf.\ e.g.\ \cite{Stuart:1996ve}. \emph{Direct methods} focus on invariant sets of some particular type or topology like the examples mentioned above. While indirect methods are restricted to invariant sets which are (asymptotically) stable in forward or backward time, direct methods can compute invariant sets of saddle type.  However, they include knowledge about the structure of the invariant set into the design of the method, in particular on how to properly parametrize the set, cf.\ e.g.\ \cite{Kevrekidis:1985ev,Beyn:90,KrauskopfOsinga:97,Doedel:al5wFh0U,Cliffe:2000dh,Beyn:2002ie}. In contrast, \emph{set oriented} techniques are capable of approximating invariant set without any\emph{a priori} knowledge on its structure \cite{DeHo97a,Dellnitz:1996ts,DeFrJu01a,DeJu02a}.  In these, the set under consideration is covered by a subset of a cubical decomposition of phase space.  While these box coverings provide a rigorous outer approximation to some invariant set, they do not provide a parametrization which varies smoothly in case that the invariant set varies smoothly with some system parameter.

The approach described in this paper is motivated by the desire to compute approximations to invariant sets of arbitrary topology, dimension and stability type which do vary smoothly as mentioned.    We propose to approximate some invariant set by a finite scattered point cloud which minimizes a certain objective functional (cf.\ \cite{BOLLT:2005cs} for another variational approach based on the lifetime of trajectories).  In its most basic form, this functional is simply the distance (given by some metric on sets, as e.g.\ the Hausdorff metric) between the point cloud and its image under the dynamics.  We give computational evidence that already this basic approach yields useful approximations, if the invariant set is (sufficiently strongly) hyperbolic.  We further propose to augment this basic functional by a second term which penalizes a ``too uneven'' distribution of the point cloud.  Here, we use a Lennard-Jones potential for this purpose.  Our numerical experiments suggest that this indeed improves the approximation quality if the involved parameters are chosen appropriately.

\section{Invariant sets}
\label{sec:invariant}

We consider a discrete-time dynamical system
\[
x_{k+1}=f(x_k), \quad k=0,1,2,\ldots,
\]
where $f:\R^d\to \R^d$ is a diffeomorphism (e.g.\ an explicit mapping or the time-$T$-map of some ordinary differential equation).  A set $X\subset \R^d$ is \emph{invariant} if
\[
X=f(X).
\] 
Simple examples for invariant sets are fixed points $\bar x = f(\bar x)$ or periodic orbits $X=\{x_0,\ldots,x_{p-1}\}$, $x_{k+1\mod p}=f(x_k)$.  If $X\subset \R^d$ is invariant then, by continuity of $f$, its closure is invariant as well and so in the following we can restrict our considerations to closed invariant sets.  In fact, we will be concerned with compact invariants sets only: Given some compact set $Q\subset \R^d$,  the set
\[
\Inv(Q) = \{x\in Q \mid f^k(x)\in Q \text{ for all } k\in\Z\}.
\]
is the \emph{maximal invariant set} within $Q$.  By definition, it contains all invariant sets which are contained in $Q$.  In many cases, e.g.\ in the numerical experiments below, $\Inv(Q)$ is independent of $Q$ if $Q$ is chosen large enough.  

\section{A variational scheme for invariant sets}
\label{sec:scheme}

Our approach to computing compact invariant sets will be based on minimizing the distance between some compact set $X\subset\R^d$ and its image $f(X)\subset\R^d$.  Let $\cC$ be the set of non-empty compact subsets of $\R^d$ and let $d:\cC\times\cC\to [0,\infty)$ be a metric on $\cC$.  Then, 
\begin{equation}\label{eq:metric}
X=f(X) \quad \text{ if and only if } \quad d(X,f(X)) = 0.
\end{equation}
In any numerical computation, we can only work on some subset of $\cC$ which can be described by finitely many parameters. On this subset, we cannot expect to satisfy $d(X,f(X))=0$.  The idea of our approach is to \emph{minimize} the (``energy'') functional $E:\cC\to [0,\infty)$, 
\begin{equation}\label{eq:vp}
E(X)=d(X,f(X)),
\end{equation}
on some suitable subset $\tilde\cC\subset\cC$ instead. 

Let $B_r(0)\subset\R^d$ be the ball centered at $0$ with radius $r$ and recall that the subset relation $\subseteq$ is a partial order on $\cC$.  
\begin{proposition}
Suppose that $\Inv(B_r(0))=S$ for some $S\in\cC$ for all sufficiently large $r$.  Then the set $S$ is the unique minimizer of $E$ which is maximal w.r.t.\ the subset relation.
\end{proposition}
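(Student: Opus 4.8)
The plan is to first characterise the minimizers of $E$ and then pin down the maximal one. By \eqref{eq:metric}, $E(X)=d(X,f(X))\ge 0$ for every $X\in\cC$, with equality precisely when $X=f(X)$; hence the minimum value of $E$ is $0$ and its set of minimizers is exactly the collection of non-empty compact invariant sets. So the claim is equivalent to: among all non-empty compact invariant sets, $S$ is the unique one that is maximal with respect to $\subseteq$.

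The key observation I would establish is that \emph{every} non-empty compact invariant set is contained in $S$. Indeed, let $X\in\cC$ satisfy $f(X)=X$. Since $X$ is compact it is bounded, so $X\subseteq B_r(0)$ for all $r$ larger than some $r_0$; enlarging $r_0$ if necessary we may also assume $\Inv(B_r(0))=S$ for all $r\ge r_0$ (here the hypothesis enters). Fix such an $r$. Because $X$ is invariant and $f$ is a diffeomorphism, $f^k(X)=X\subseteq B_r(0)$ for every $k\in\Z$, so by the definition of $\Inv$ we get $X\subseteq \Inv(B_r(0))=S$.

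It then remains to assemble the conclusion. First, $S=\Inv(B_r(0))$ is itself a non-empty (since $S\in\cC$) compact invariant set, hence a minimizer of $E$. Second, $S$ is maximal among minimizers: if a minimizer $X$ satisfied $S\subsetneq X$, then by the previous paragraph $X\subseteq S$, a contradiction. Third, $S$ is the \emph{unique} maximal minimizer: if $X$ is any minimizer that is maximal with respect to $\subseteq$, then $X\subseteq S$ by the key observation, and since $S$ is also a minimizer, maximality of $X$ forces $X=S$.

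The argument is essentially bookkeeping once the key observation is in place; the only points needing a little care are (i) that the minimum of $E$ is attained and equals $0$, which is immediate from \eqref{eq:metric}; (ii) that $\Inv(Q)$ is genuinely invariant and contains every invariant subset of $Q$ — this is where it matters that $f$ is a diffeomorphism, so that $f^k(x)$ is defined for negative $k$; and (iii) that a single radius $r$ can be chosen to serve simultaneously for ``$X\subseteq B_r(0)$'' and for ``$\Inv(B_r(0))=S$''. None of these is a real obstacle; the substantive content of the proposition is the containment $X\subseteq S$ for all compact invariant $X$, which is the only step that uses the hypothesis on $\Inv(B_r(0))$.
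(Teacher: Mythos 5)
Your proof is correct and follows essentially the same route as the paper's: identify the minimizers of $E$ with the non-empty compact invariant sets and show they are all contained in $S$, which is itself a minimizer. If anything, your write-up is slightly more careful than the paper's, which argues via the union of all compact invariant sets and leaves implicit both the point where the hypothesis on $\Inv(B_r(0))$ enters (it is needed to see that this union is bounded, hence compact, and in fact equal to $S$) and the verification that $S$ itself is invariant.
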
 

\begin{proof}
By (\ref{eq:metric}) and the definition of $E$, any minimizer of $E$ is an invariant set.  Thus, the union $U=\cup_{X\in\cC, X=f(X)} X$ of all compact invariant sets is a minimizer.  Further, since it contains all other minimizers from $\cC$, it is the unique set which is maximal w.r.t.\ the subset relation.
\end{proof}	 

\paragraph{The Hausdorff metric.}  A common way to measure distances between compact sets is via the Hausdorff metric which is defined as follows: For any non-empty set $X\subset \R^d$, the distance of a point $y\in\R^d$ from $X$ is
\[
d(y,X) = \inf_{x\in X} \|y-x\|_2.
\]
The distance of a second non-empty set $Y\subset\R^d$ from $X$ is
\[
d(Y,X) = \sup_{y\in Y} d(y,X)
\]
and since this distance is not symmetric one defines the \emph{Hausdorff metric}
\begin{align*}
d_H(X,Y) & =\max\left\{d(Y,X),d(X,Y)\right\}\\
& = \max\left\{\sup_{y\in Y} \inf_{x\in X} \|y-x\|_2,\sup_{x\in X} \inf_{y\in Y} \|y-x\|_2\right\}
\end{align*}
between $X$ and $Y$.  Note that $(\cC,d_H)$ is complete.

\paragraph{A modified Hausdorff metric.}

As mentioned, we are going to minimize the energy functional (\ref{eq:vp}) on some subset of $\cC$. In fact, we will simply use finite subsets $\tX=\{x_1,\ldots,x_n\}\subset\R^d$ (i.e.\ \emph{point clouds}) for this purpose, such that $E$ can be seen as a function on $\R^{nd}$, where $n$ is the (fixed) number of points in these subsets.  Unfortunately, $E:\R^{nd}\to [0,\infty)$ is not smooth and this prevents us from using standard schemes for the minimization. We therefore employ the following modified Hausdorff distance instead: We use
\[
\hd(y,\tX) = d(y,\tX)^2 = \min_{x\in \tX} \|y-x\|^2_2
\]
in order to measure the distance of some point $y\in\R^d$ from some non-empty finite set $\tX$. We further define the distance
\[
\hd(\tY,\tX) = \frac{1}{|\tY|}\sum_{y\in \tY} \hd(y,\tX) = \frac{1}{|\tY|}\sum_{y\in \tY} \min_{x\in \tX} \|y-x\|^2_2  
\]
of some non-empty finite set $\tY$ from $\tX$ ($|\tilde Y|$ denotes the number of points in $\tilde Y$) and finally define the Hausdorff like distance
\[
\hd_H(\tX,\tY) = \frac12(\hd(\tX,\tY) + \hd(\tY,\tX))
\]
between two non-empty finite sets $\tX$ and $\tY$.
Note that $\hd_H$ is a metric on the set of non-empty finite subsets of $\R^d$.  For some set $\tX=\{x_1,\ldots,x_n\}\subset\R^d$, the corresponding energy functional reads explicitly
\begin{align}\label{eq:vp_constr}
\nonumber 
\hat E(x_1,\ldots,x_n) & = \hat d_H(\tX,f(\tX))\\
& = \frac{1}{2n} \sum_{i=1}^n \min_{j=1:n} \|x_i-f(x_j)\|^2_2 + \frac{1}{2n} \sum_{i=1}^n \min_{j=1:n} \|f(x_i)-x_j\|^2_2\\
\nonumber & = \frac{1}{2n} \sum_{i=1}^n \|x_i-f(x_{j(i)})\|^2_2 + \frac{1}{2n} \sum_{i=1}^n \|f(x_i)- x_{j(i)}\|^2_2,
\end{align}
where $j(i)=\argmin_{j=1,\ldots,n} \|x_{j}-f(x_i)\|^2$.
 
\paragraph{Implementation.} We are going to minimize the energy functional $\hat E$ by a standard Quasi-Newton scheme, namely the limited memory BFGS scheme as implemented in the Matlab function \texttt{fminlbfgs}\footnote{by Dirk-Jan Kroon, University of Twente}.  In order to compute the distance $\hat d(y,X)$ of some point $y$ from some finite set $\tX$, we employ a kd-tree based search for some point $x=x(y)\in \tX$ which is closest to $y$.  This is conveniently implemented in the \texttt{knnsearch} command in Matlab.  In fact, \texttt{knnsearch} can return the $k\in\N$ nearest neighbours at once and each query of this type takes $\cO(\log |\tX|)$ time.  Overall, this translates into a running time of $\cO(|\tX|\log(\tX))$ and all the examples in the following section only take a few seconds to run on a recent machine.  For $|\tX|=10^4$, the runtime will be a few minutes.

\section{Computational experiments}
\label{sec:experiments}

\begin{experiment}[Fixed point in 1d]
Let us start by the simplest possible example: A linear map on the line: We consider $f:\R\to\R$, $f(x)=a x$ with $a=0.1$ and $a=10$. The maximal invariant set in $Q=[-1,1]$ is $\Inv(Q)=\{0\}$.   We initialize $\tX$ with 40 points, chosen randomly from $[-1,1]$ according to a uniform distribution and terminate the BFGS iteration  as soon as $\|\nabla\hat E\|_\infty < 10^{-6}$. Figure~\ref{fig:linear1d} shows the evolution of $\tX$ in course of the optimization for both values of $a$.  The BFGS iteration terminates after 21 resp.\ 18 steps with an $\hat E$ value of around $10^{-11}$, the Hausdorff distance of $\tX$ from $\{0\}$ is $\approx 10^{-6}$ for $a=0.1$ and $\approx 2\cdot 10^{-5}$ for $a=10$.
  
\begin{figure}[H]
\begin{center}
\includegraphics[trim = 0in 3in 0.5in 3in, clip,width=0.49\textwidth]{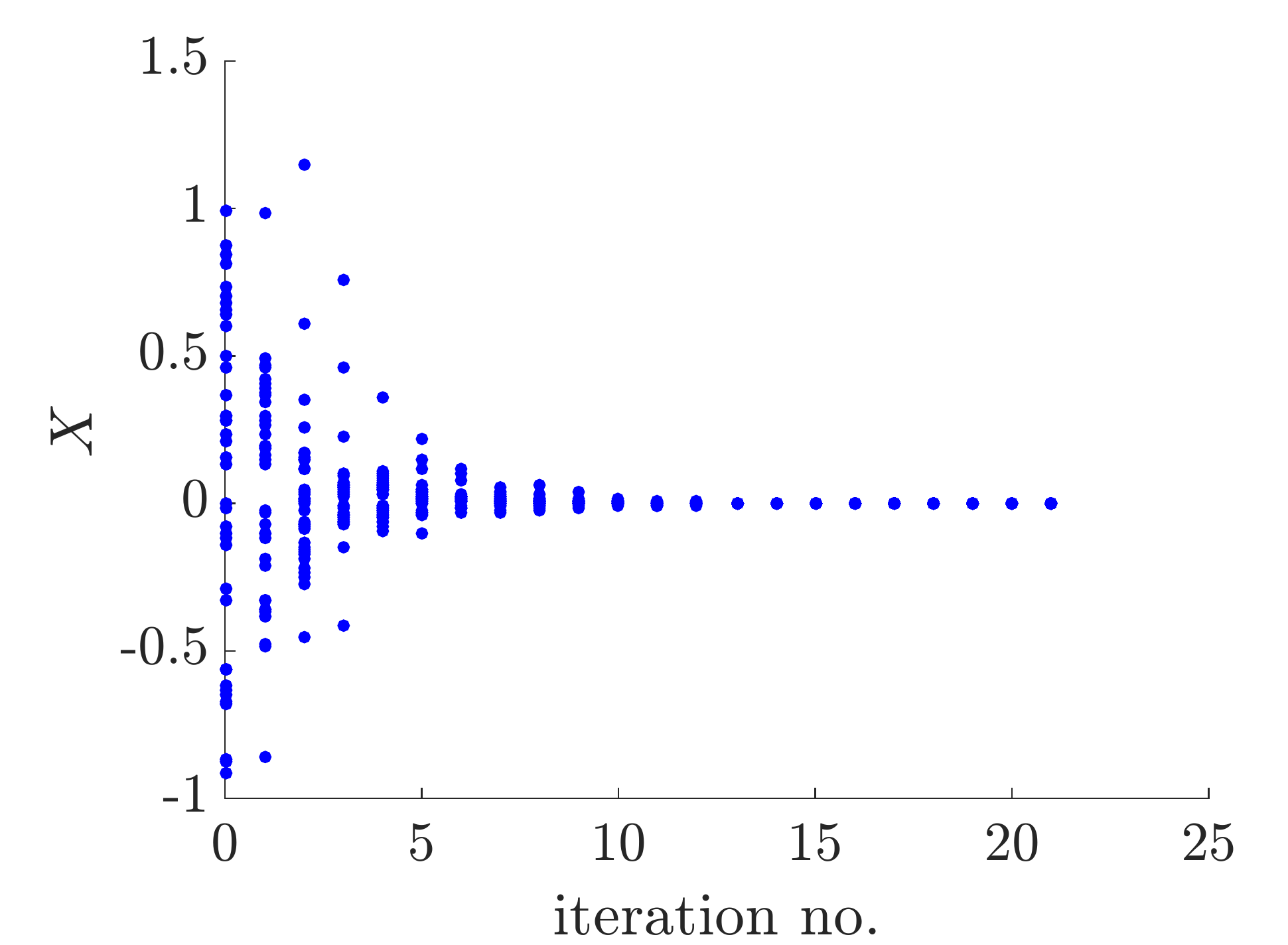}
\includegraphics[trim = 0in 3in 0.5in 3in, clip,width=0.49\textwidth]{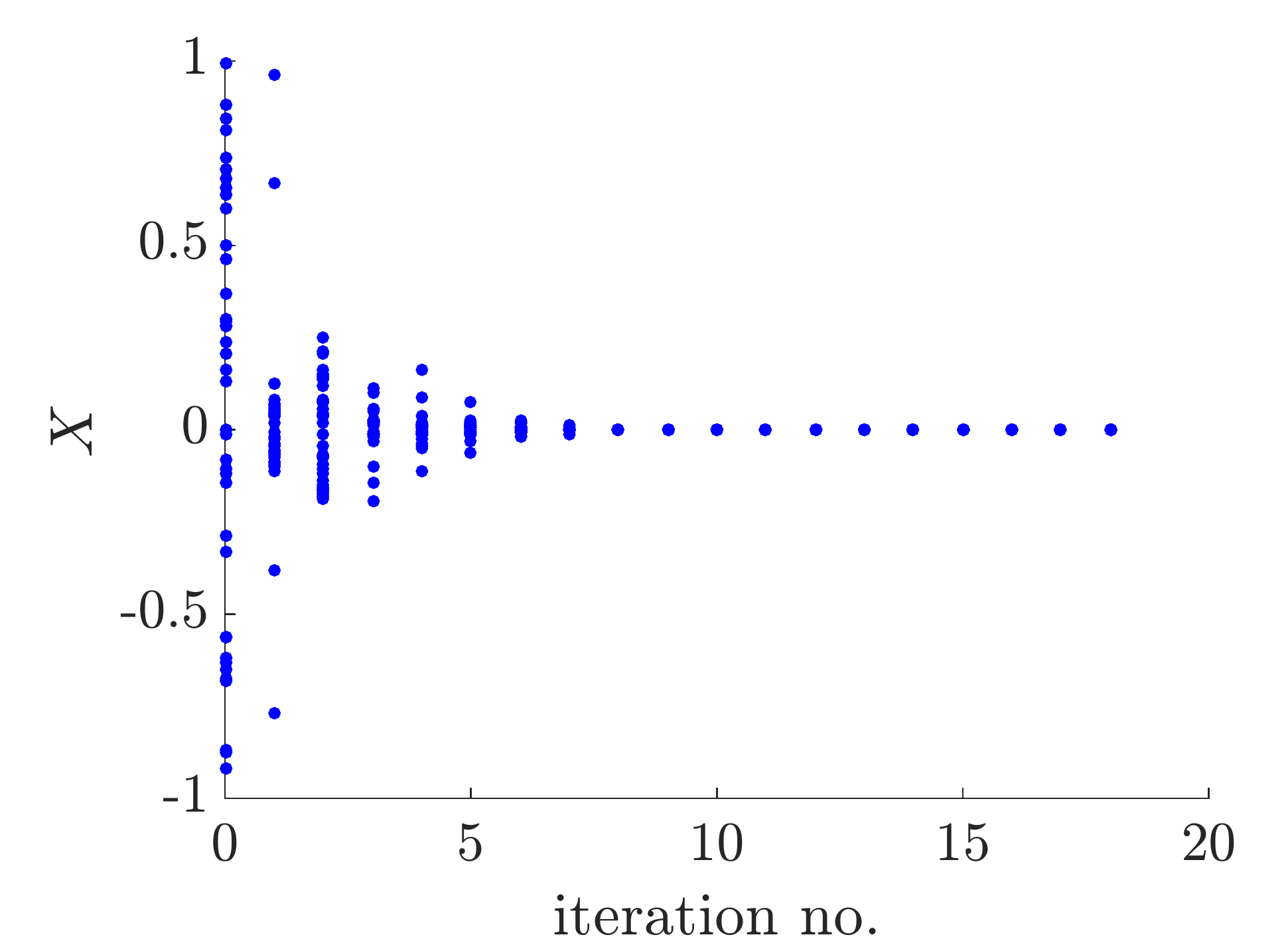}
\caption{Linear map on the line, evolution of $\tX$ in course of the optimization. Left: $a=0.1$, right: $a=10$.}
\label{fig:linear1d}
\end{center}
\end{figure}

The speed of convergence seems to strongly depend on the contraction constant $a$: Figure~\ref{fig:linear1d_a_close_to_1} shows the evolution of $\tX$ in course of the BFGS iteration for $a=1.1$ (left) and $a=1.01$ (right).  While in both cases the objective function value is less than $10^{-8}$, the Hausdorff distance of $\tX$ from $\{0\}$ is still rather large, namely $\approx 0.003$ for $a=1.1$ and $\approx 0.1$ for $a=1.01$, even after a much larger number of iterations.

\begin{figure}[H]
\begin{center}
\includegraphics[trim = 0in 3in 0.5in 3in, clip,width=0.49\textwidth]{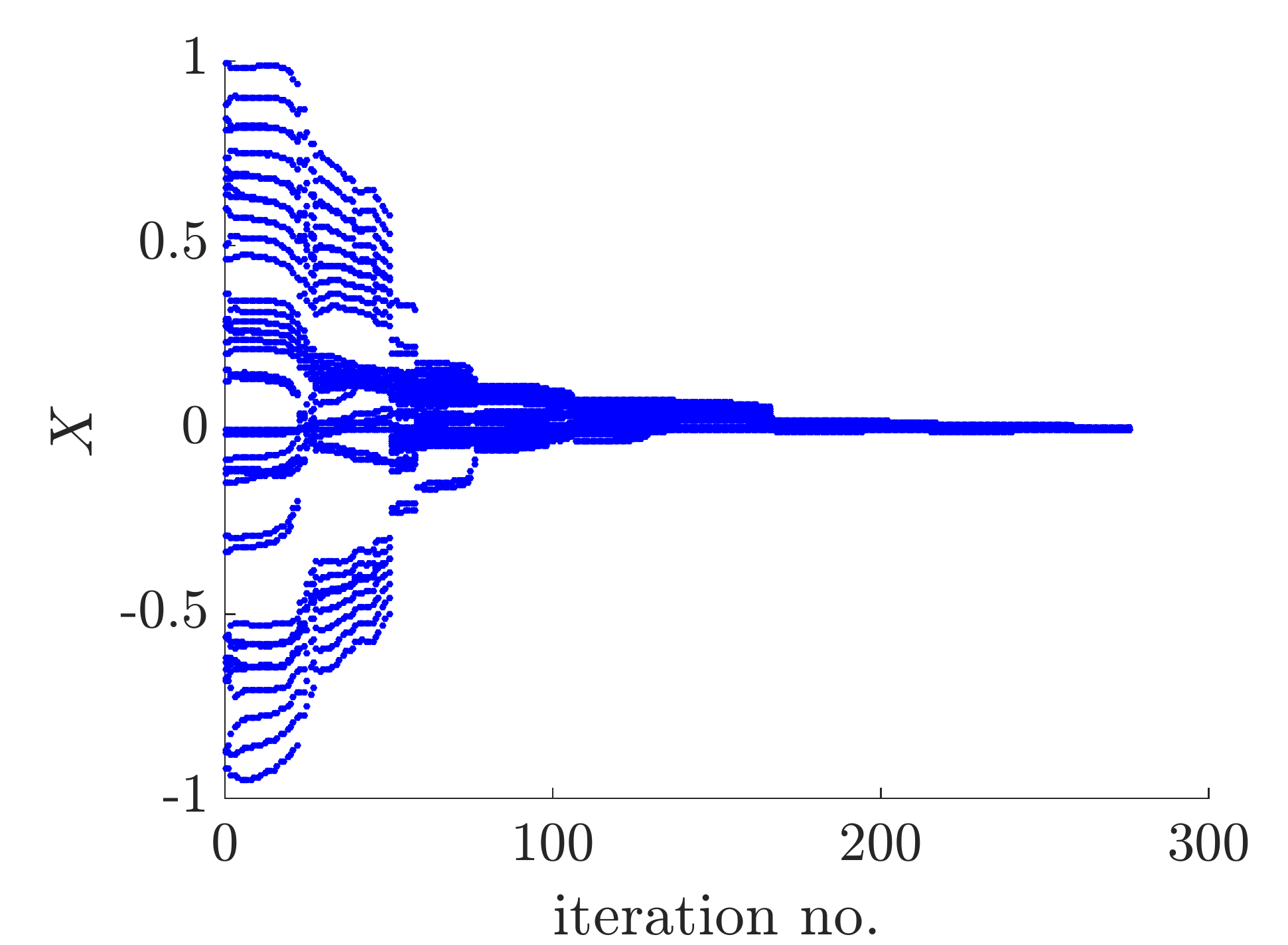}
\includegraphics[trim = 0in 3in 0.5in 3in, clip,width=0.49\textwidth]{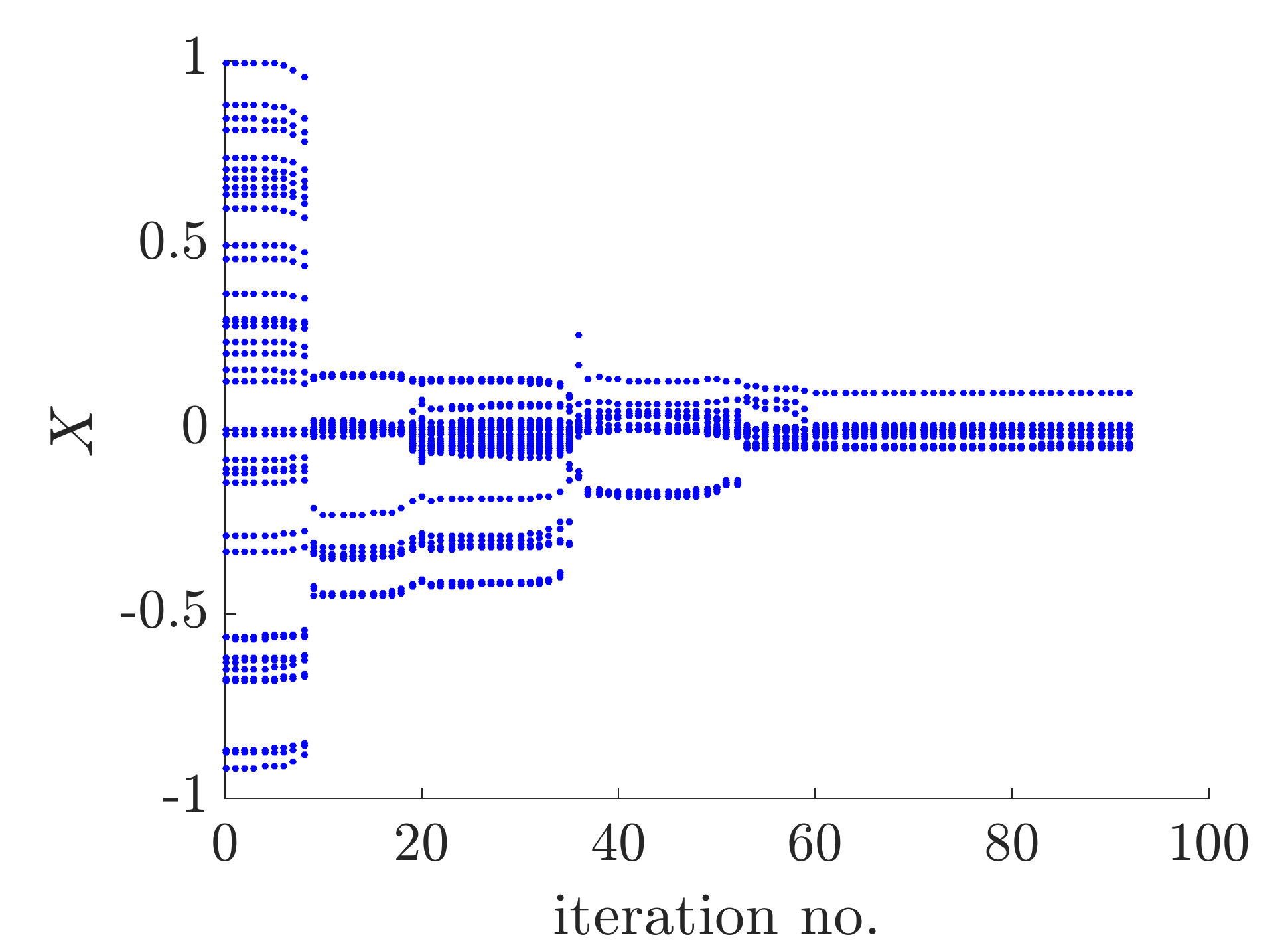}
\caption{Linear map on the line, evolution of $\tX$ in course of the optimization. Left: $a=1.1$, right: $a=1.01$.}
\label{fig:linear1d_a_close_to_1}
\end{center}
\end{figure}
\end{experiment}

\begin{experiment}[A connecting orbit in 1d]
We next consider a nonlinear map on the line for which the maximal invariant set is the interval $[0,1]$, the map is $f(x) = x + a x(1-x)$ with $a=0.8$. It possesses two fixed points, namely $\bar x_1=0$ (unstable) and $\bar x_2=1$ (stable).  Points within $(0,1)$ are heteroclinic to these two equilibria so that the maximal invariant set within any set $Q$ covering $[0,1]$ is the interval $[0,1]$.  We choose $Q=[-1,2]$, initialize $X$ by a set of points chosen randomly from $[-1,2]$ according to a uniform distribution.  Figure~\ref{fig:connecting_1d} shows the evolution of $\tX$ in course of the BFGS iteration for $n=100$ (left), as well as the Hausdorff distance $d_H$ between $\tilde X$ and $[0,1]$ (approximated by computing $d_H$ between $\tilde X$ and a grid of $10^4$ points in $[0,1]$).

\begin{figure}[H]
\begin{center}
\includegraphics[trim = 0in 3in 0.5in 3in, clip,width=0.49\textwidth]{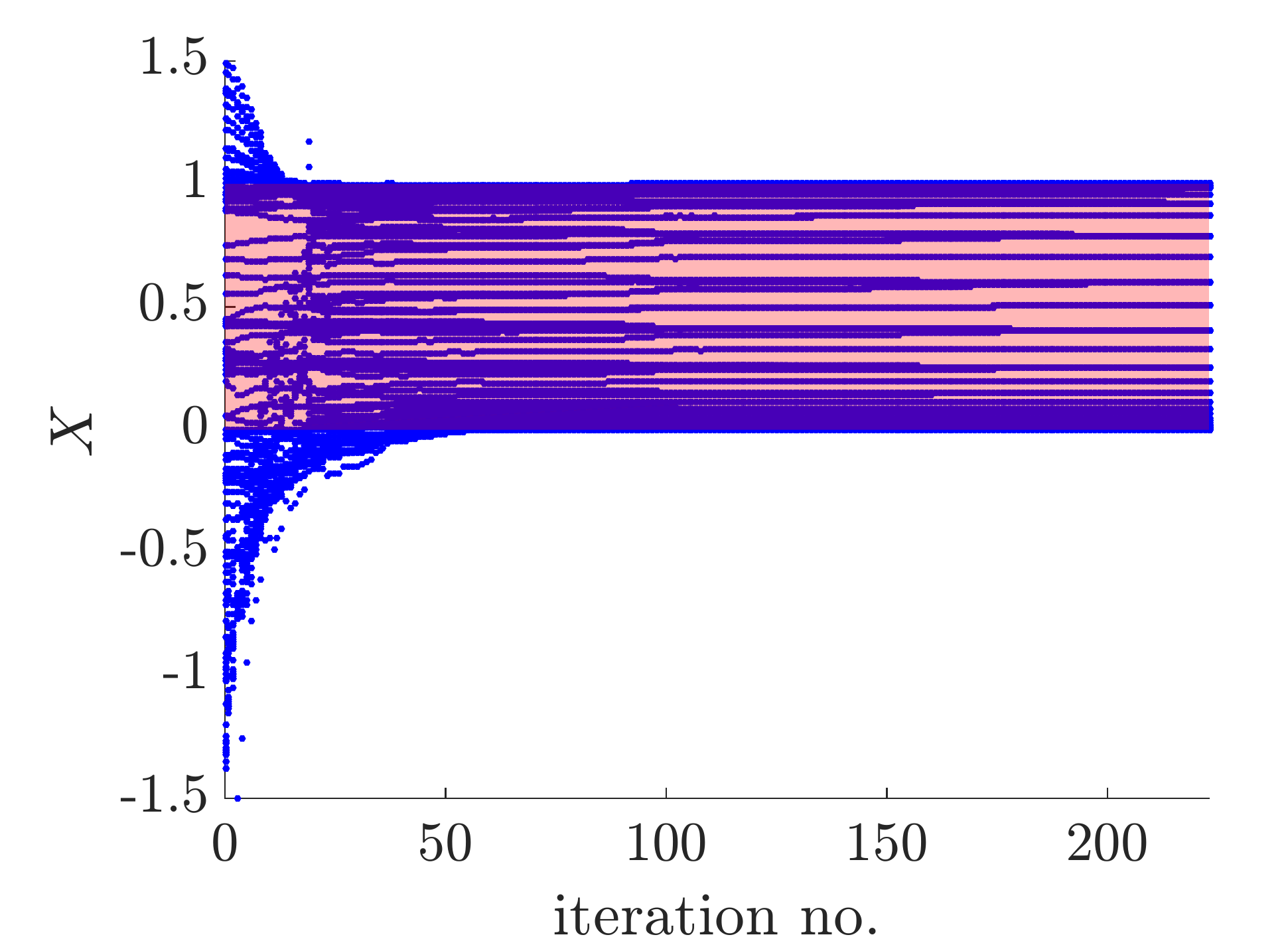}
\includegraphics[trim = 0in 3in 0.5in 3in, clip,width=0.49\textwidth]{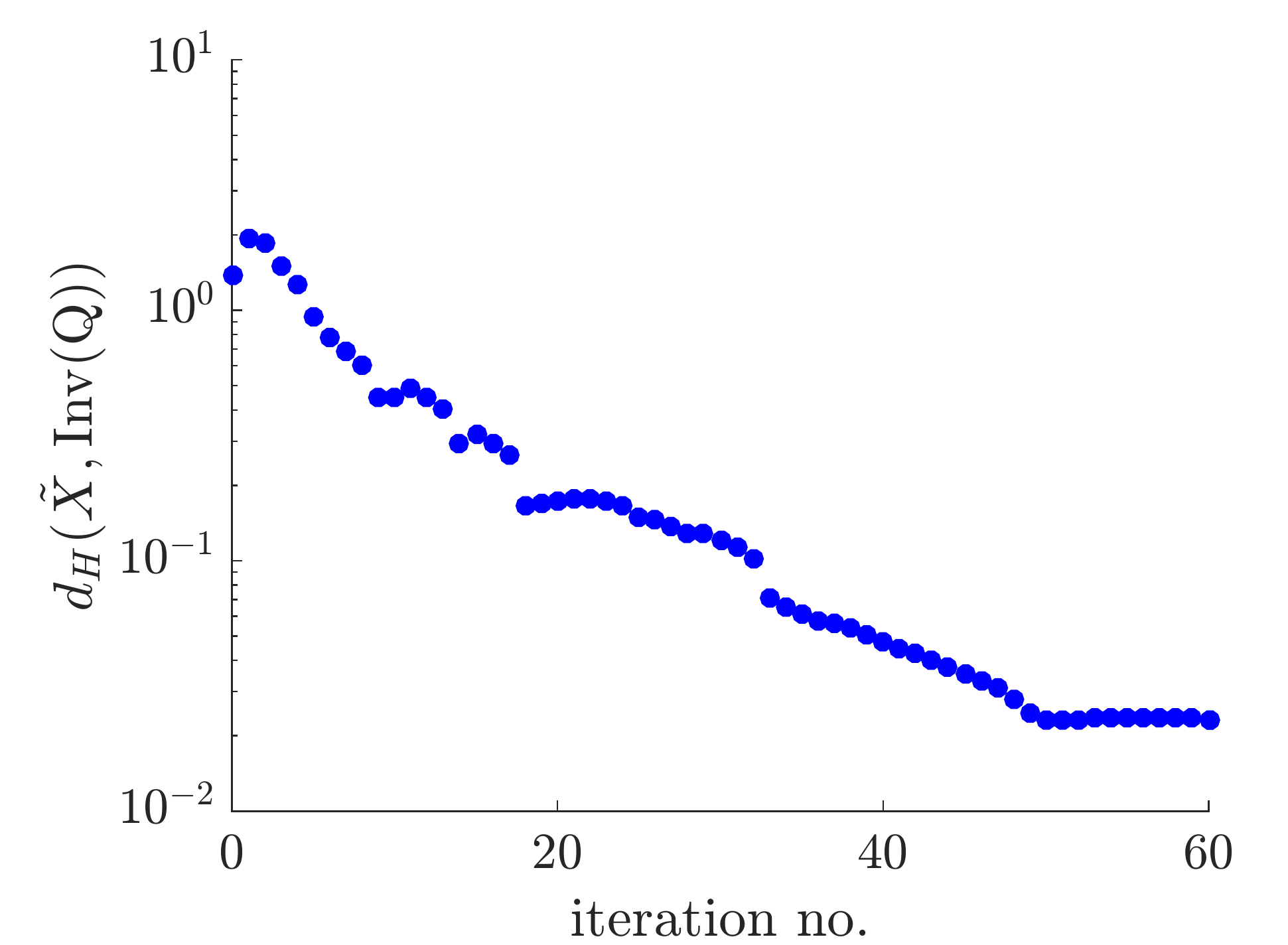}
\caption{Connecting orbit on the line: approximation of $\Inv(Q)=[0,1]$ (lightly red shaded) by a set $\tX$ of $n=100$ (left) points. Right: the Hausdorff distance between $\tX$ and the maximal invariant set $[0,1]$ shrinks to $\approx 3\cdot 10^{-2}$ in course of the iteration.}
\label{fig:connecting_1d}
\end{center}
\end{figure}
\end{experiment}

\begin{figure}[H]
\begin{center}
\includegraphics[trim = 1in 3in 1in 3in, clip,width=0.32\textwidth]{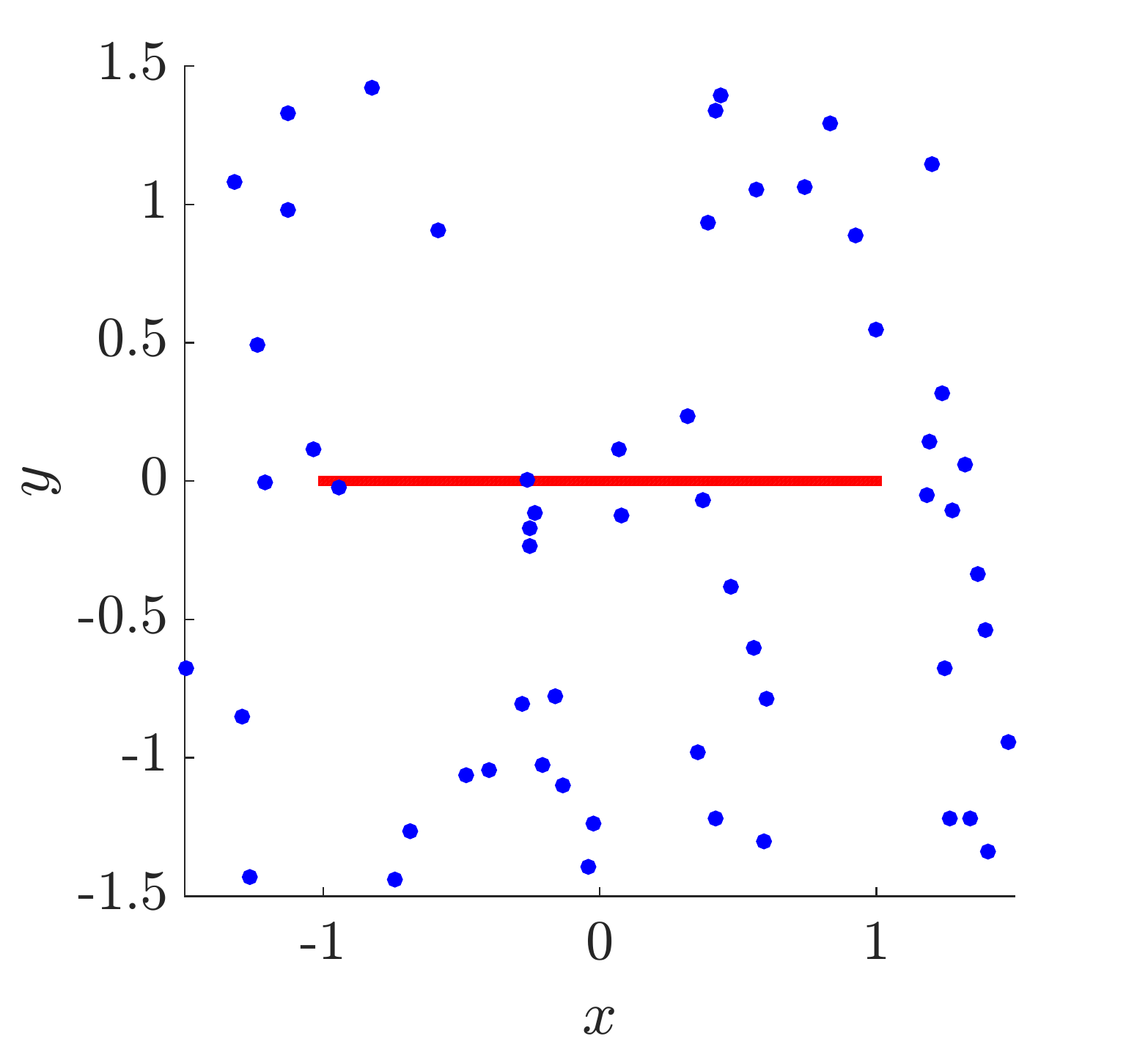}
\includegraphics[trim = 1in 3in 1in 3in, clip,width=0.32\textwidth]{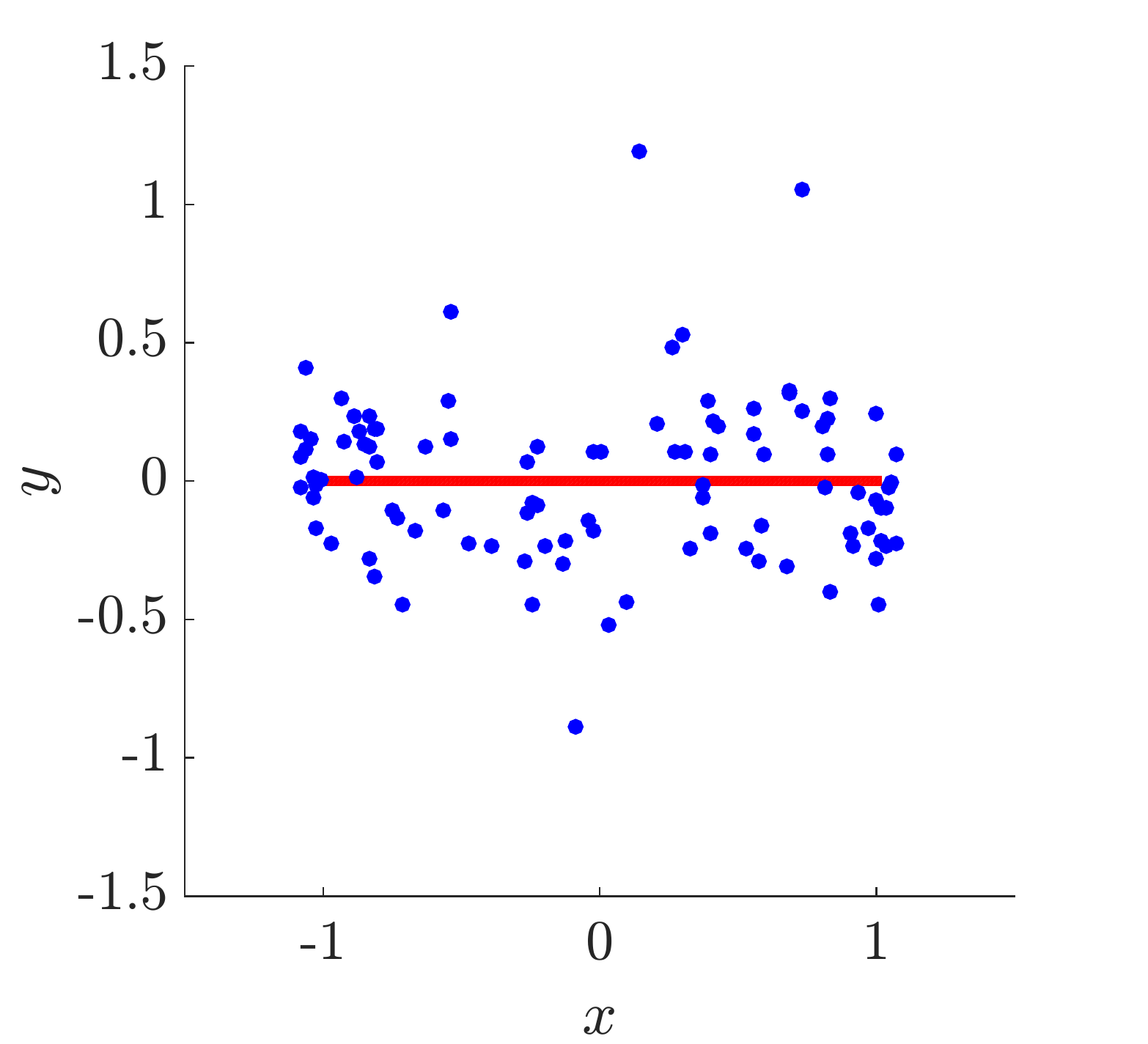}
\includegraphics[trim = 1in 3in 1in 3in, clip,width=0.32\textwidth]{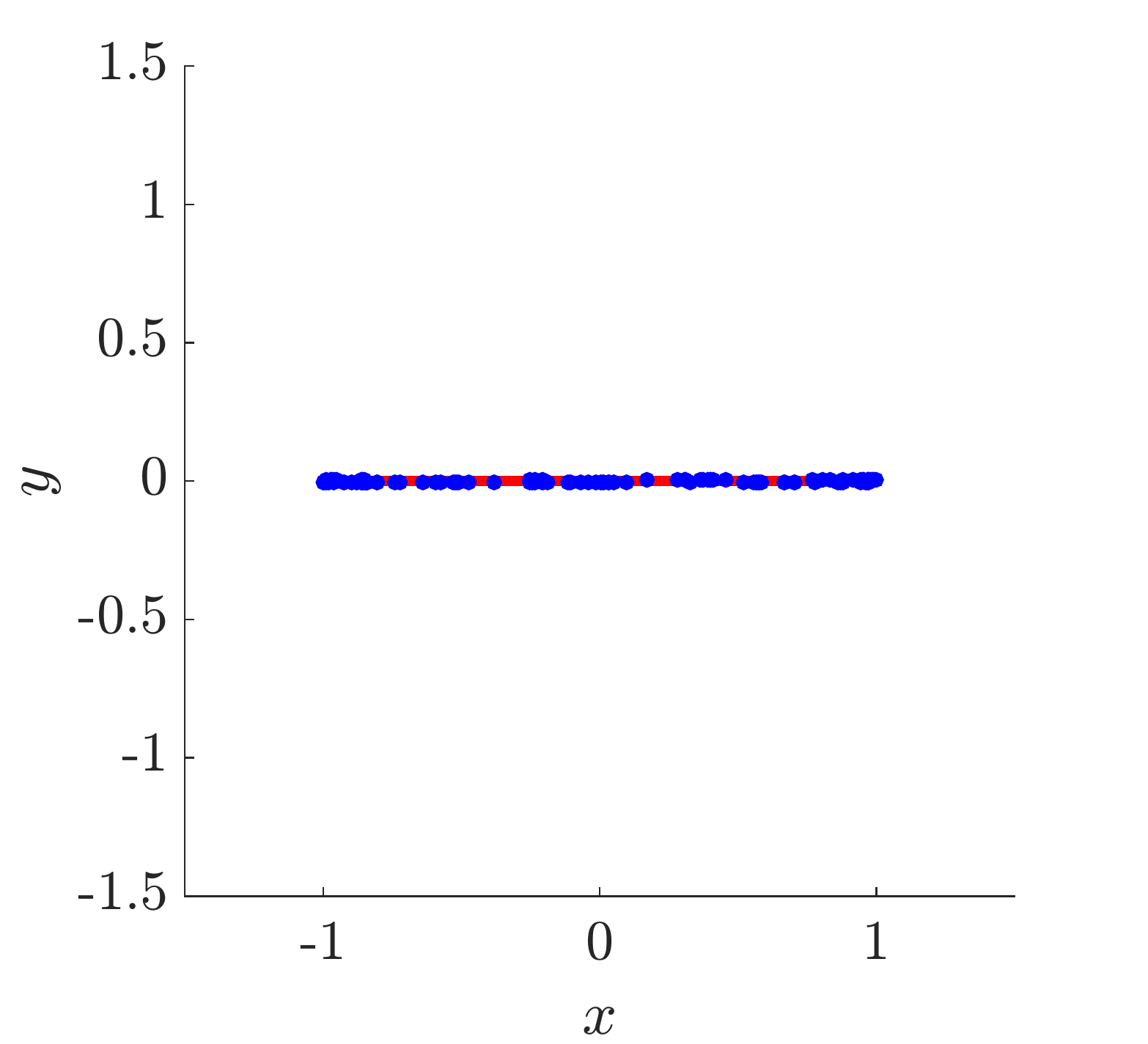}
\caption{Connecting orbit in the plane: the set $\tX$ (blue) initially (left), after 3 (center) and 30 (right) BFGS iterations.}
\label{fig:connecting_2d_X}
\end{center}
\end{figure}

\begin{experiment}[A connecting orbit in 2d]
Similarly, for the map
\[
f(x,y) = (1.5x^3 - 0.5x, 10y)
\]
with fixed points $(-1,0)$ (unstable center), $(0,0)$ (saddle) and $(1,0)$ (unstable center)  the maximal invariant set within any set $Q$ which contains $[-1,1]\times \{0\}$ is $\Inv (Q)=[-1,1]\times \{0\}$.  We start with a set $\tX$ of 100 points which are chosen randomly from $[-2,2]^2$ according to a uniform distribution. Figure~\ref{fig:connecting_2d_X} shows the iterates of $\tX$ in course of the optimization after 3 and 30 BFGS steps. 
\end{experiment}

\begin{experiment}[An \emph{unstable} invariant disk in the plane]\label{exp:disk}
We repeat the experiment with a map for which the maximal invariant set inside a sufficiently large neighborhood is an unstable disk. We consider the vector field
\[
v(x,y) = \begin{bmatrix} -y+ax(x^2+y^2-1)\\x+ay(x^2+y^2-1)\end{bmatrix}
\]
with $a=10$ and define the map $f$ as one Euler step with step size $h=0.1$, i.e.
\[
f(x,y) = (x,y) + hv(x,y).
\]
We start with a set $\tX$ of 1000 points which are chosen randomly from $[-2,2]^2$ according to a uniform distribution. Figure~\ref{fig:circle_2d_X} shows the iterates of $\tX$ in course of the optimization after 3 and 30 BFGS steps.

\begin{figure}[H]
\begin{center}
\includegraphics[trim = 1.2in 3in 1.2in 3in, clip,width=0.4\textwidth]{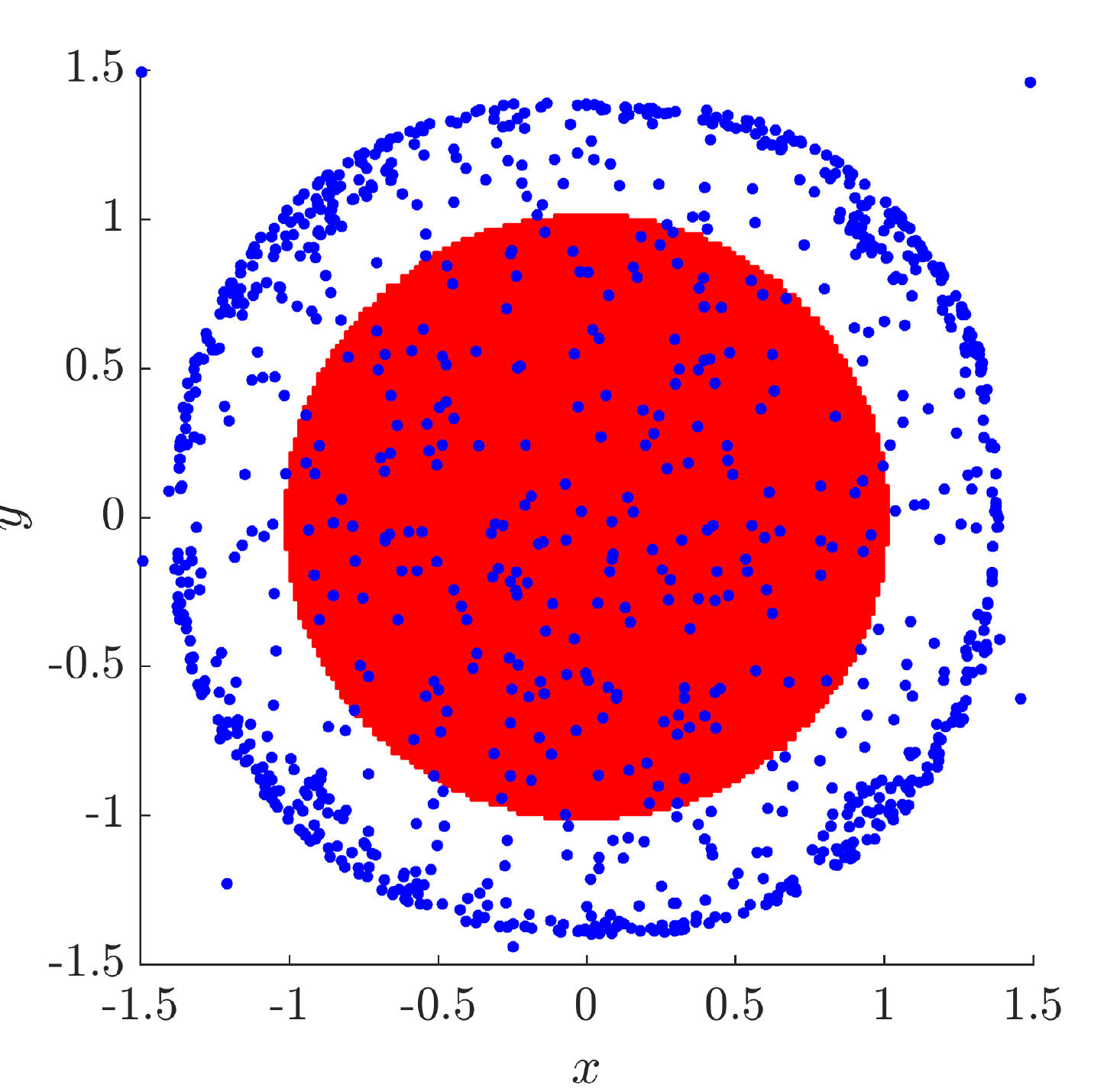}
\includegraphics[trim = 1.2in 3in 1.2in 3in, clip,width=0.4\textwidth]{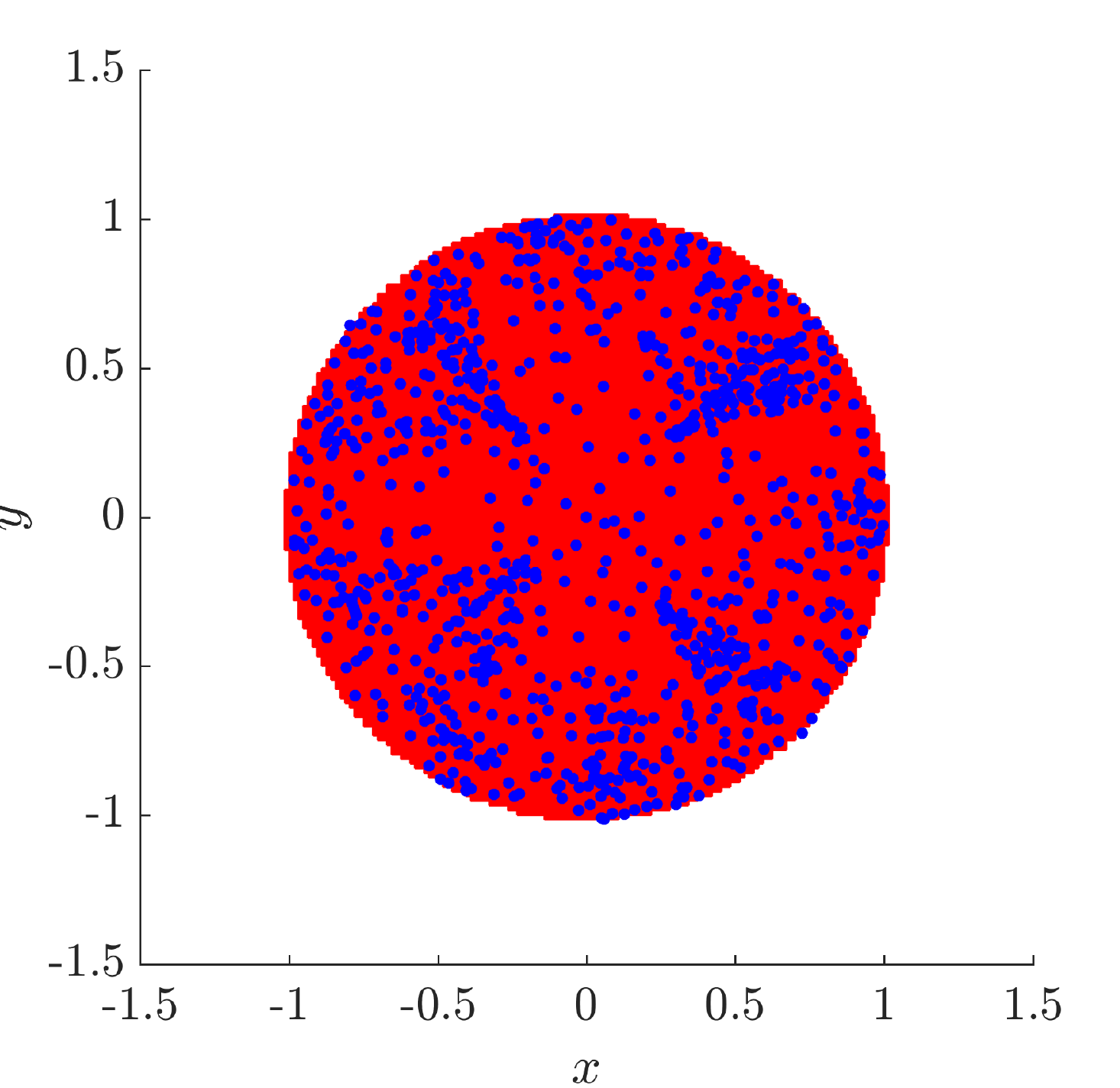}
\caption{Unstable invariant disk (red) in the plane: the set $\tX$ (blue) after 3 (left) and 30 (right) BFGS iterations.}
\label{fig:circle_2d_X}
\end{center}
\end{figure}

Clearly, the objective function $\hat E$ will typically possess many local minimia and the result of the minimization will strongly depend on the initialization of $\tX$.  This is exemplified in Figure~\ref{fig:circle_2d_init}, where the results of the BFGS after 500 iterations is shown for different initializations of $\tX$.  This is one motivation for the construction proposed in Section~\ref{sec:addpot}.

\begin{figure}[H]
\begin{center}
\includegraphics[trim = 1.2in 3in 1.2in 3in, clip,width=0.32\textwidth]{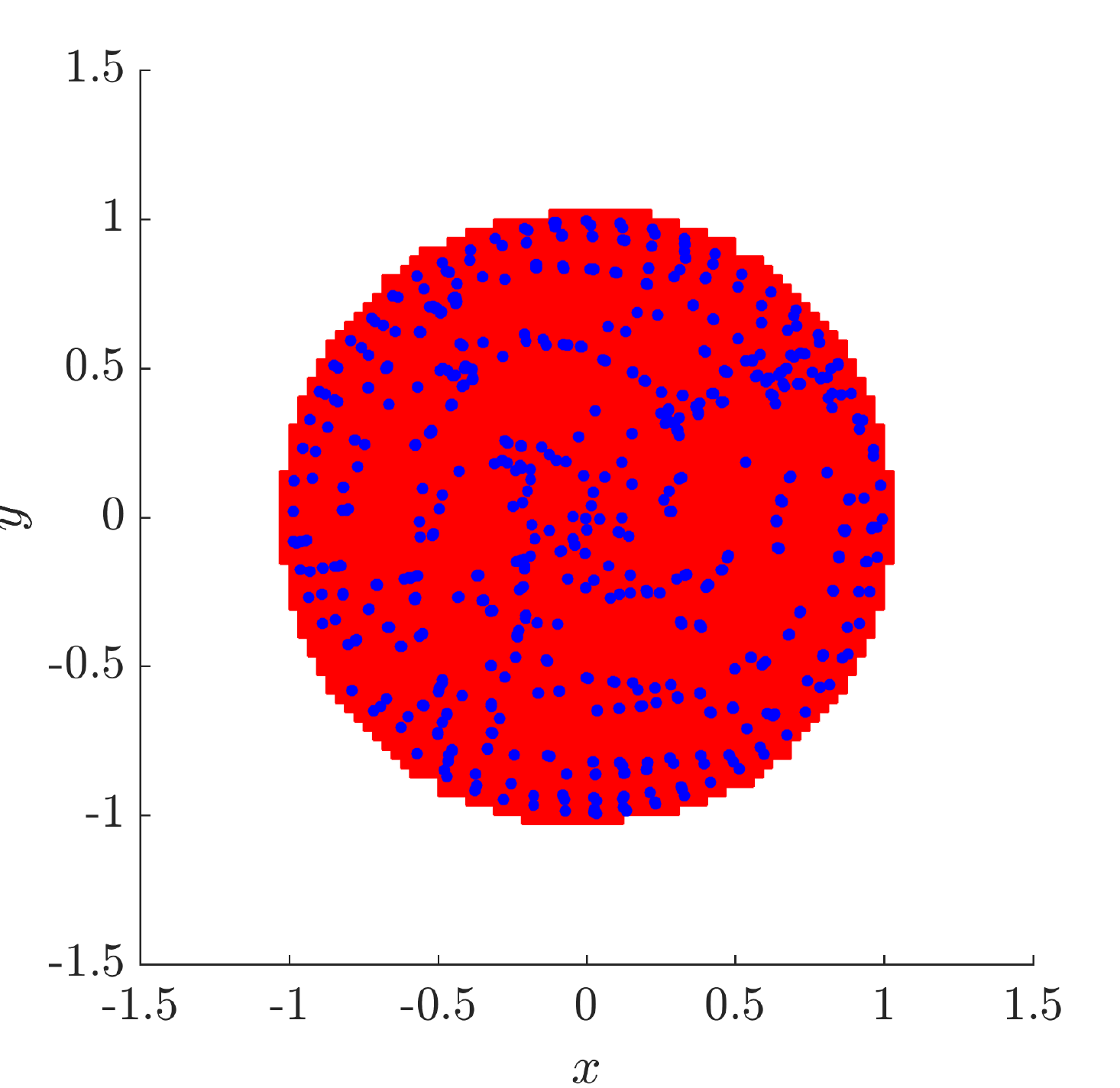}
\includegraphics[trim = 1.2in 3in 1.2in 3in, clip,width=0.32\textwidth]{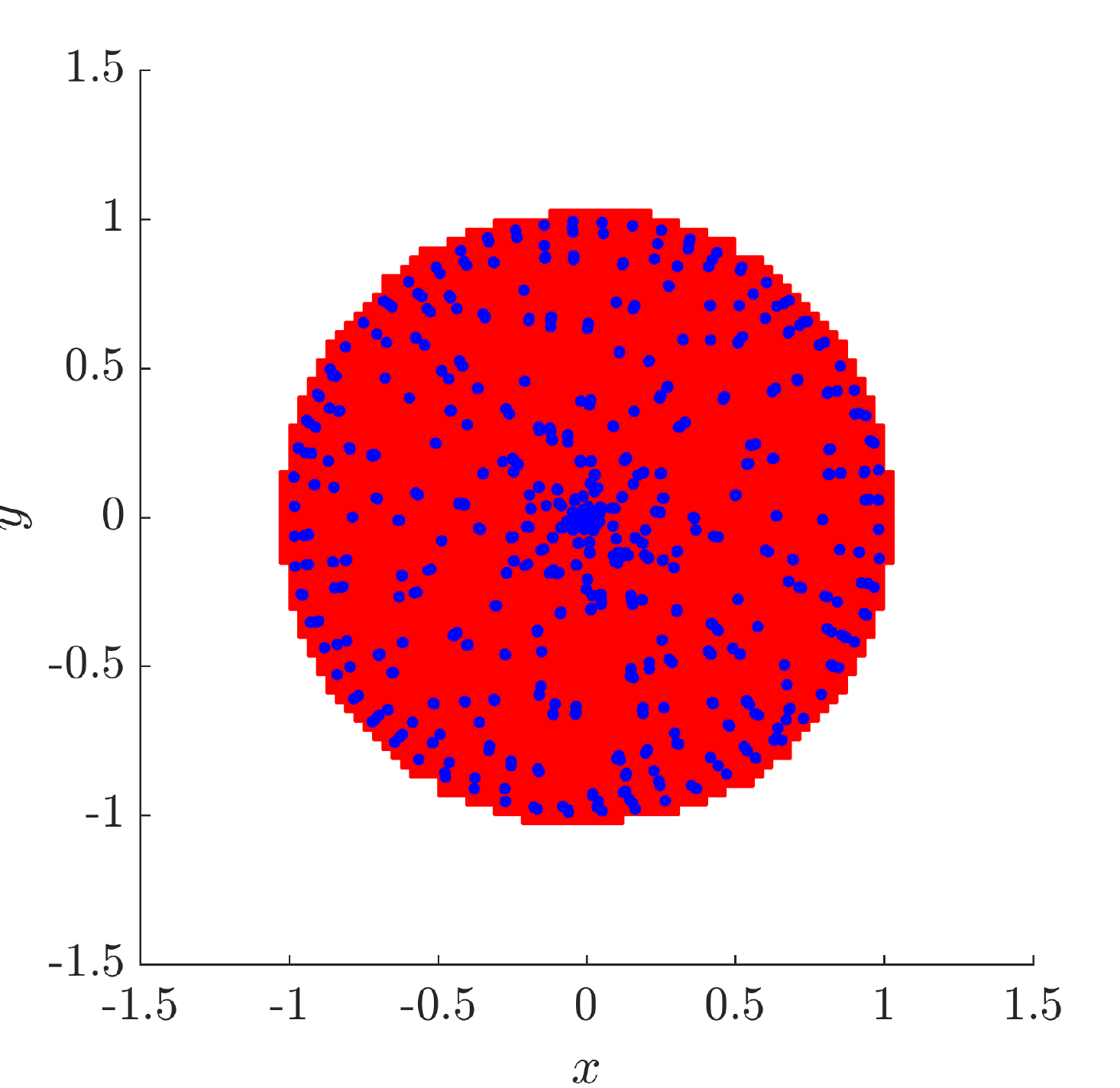}
\includegraphics[trim = 1.2in 3in 1.2in 3in, clip,width=0.32\textwidth]{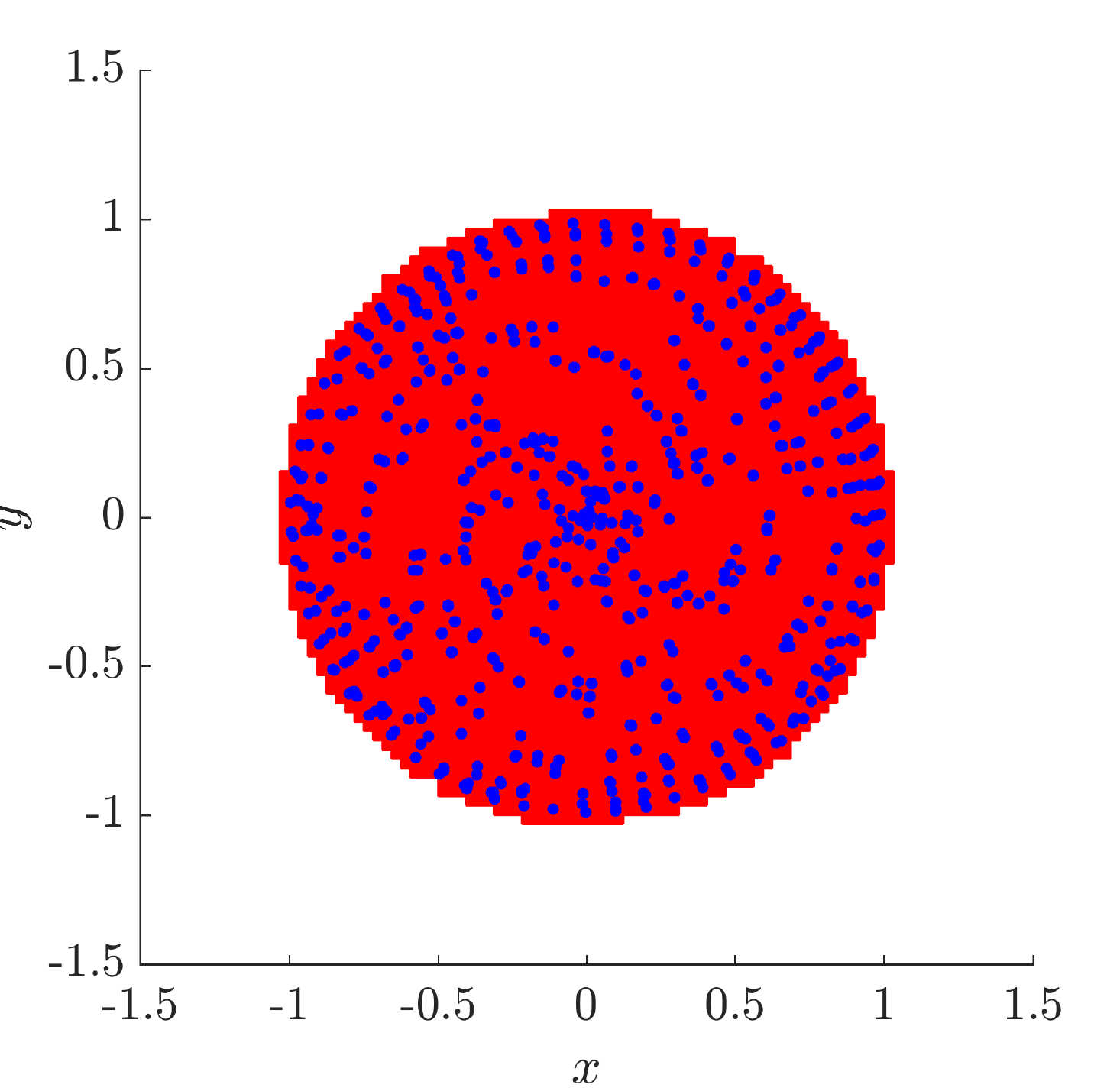}
\caption{Unstable invariant disk (red) in the plane: the 500th iterate of $\tX$ for an initial set $\tX$ of 1000 points chosen from a uniform grid (left), randomly (center) and as pseudo-random points, i.e.\ Halton points \cite{Niederreiter:1992bb}, (right).}
\label{fig:circle_2d_init}
\end{center}
\end{figure}
\end{experiment}

\begin{experiment}[The H\'enon map]\label{exp:henon}
Let us now consider an example with a complicated maximal invariant set as exhibited by the (scaled) H\'enon map
\[
f(x,y) = (1-ax^2+y/3, 3bx),
\]
with parameter values $a=1.3$ and $b=0.3$.  Figure~\ref{fig:henon-mis} shows the attractor (left) as well as a covering of the maximal invariant set (right) as computed by \textsf{GAIO\footnote{\href{https://github.com/gaioguy/GAIO}{\texttt{https://github.com/gaioguy/GAIO}}}} \cite{DeFrJu01a}.  In addition to the attractor, the maximal invariant set contains a saddle fixed point near $(-1.2,-1.2)$ and the piece of its unstable manifold which connects to the attractor.
\begin{figure}[H]
\begin{center}
\includegraphics[trim = 1in 3in 1in 3in, clip,width=0.455\textwidth]{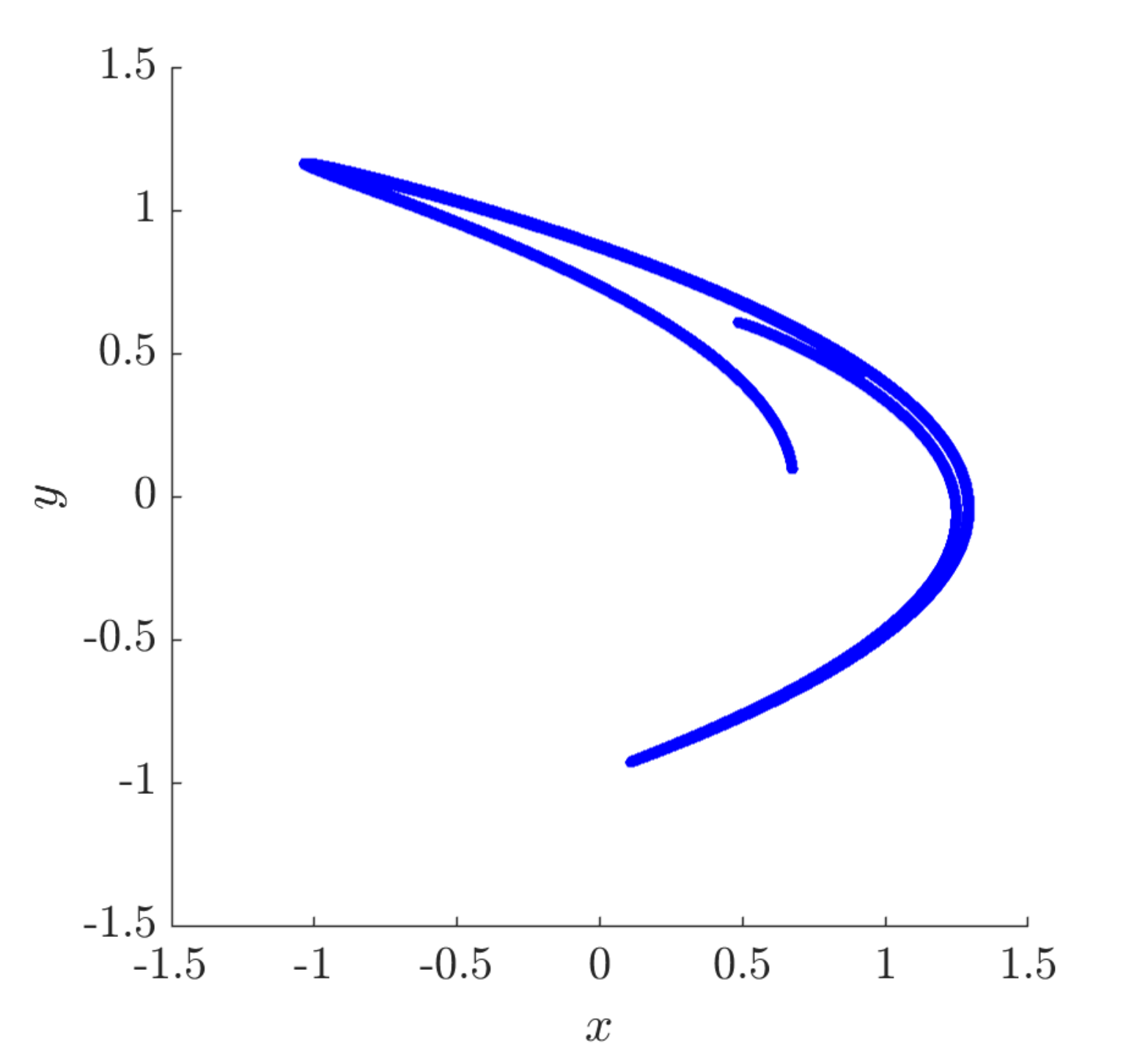}
\includegraphics[trim = 1in 3in 1in 3in, clip,width=0.45\textwidth]{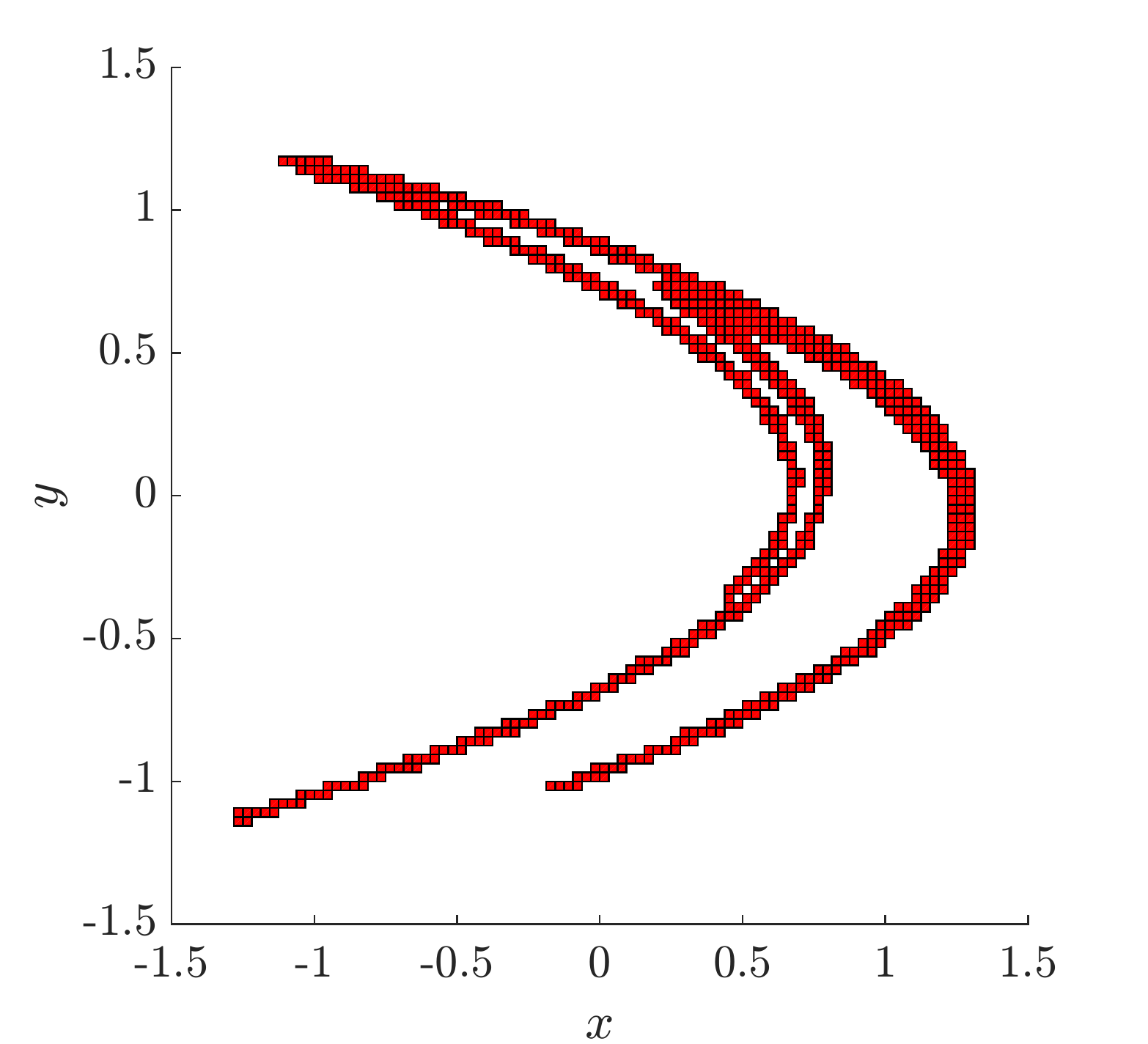}
\caption{H\'enon map: Attractor (left) and box covering of the maximal invariant set as computed by \textsf{GAIO} (right).}
\label{fig:henon-mis}
\end{center}
\end{figure}

We initialize the optimization of $\hat E$ with a set $\tX$ of $1000$ points which have been chosen randomly from the square $[-2,2]^2$ according to a uniform distribution.  

\begin{figure}[H]
\begin{center}
\includegraphics[trim = 1in 3in 1in 3in, clip,width=0.45\textwidth]{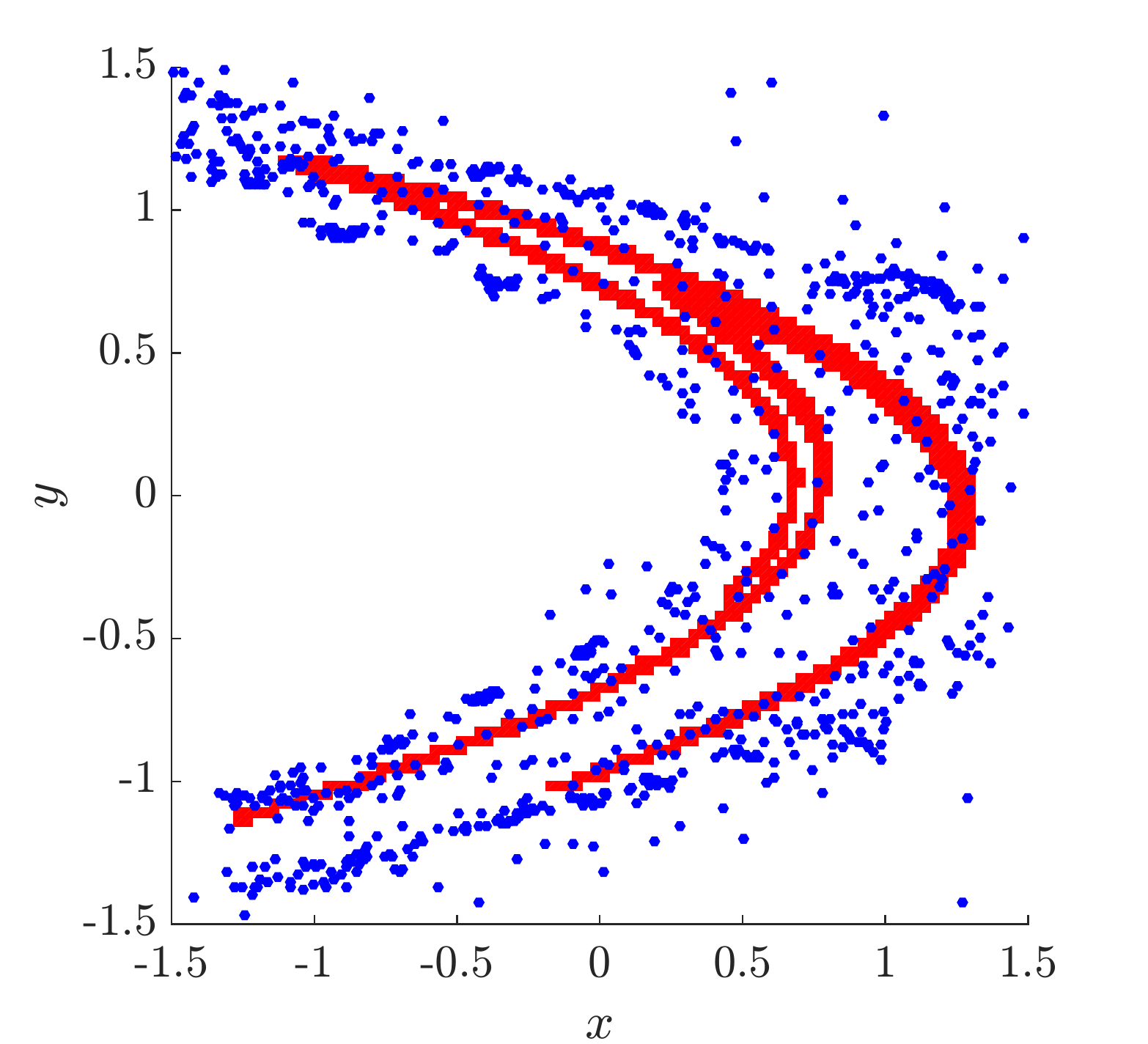}
\includegraphics[trim = 1in 3in 1in 3in, clip,width=0.45\textwidth]{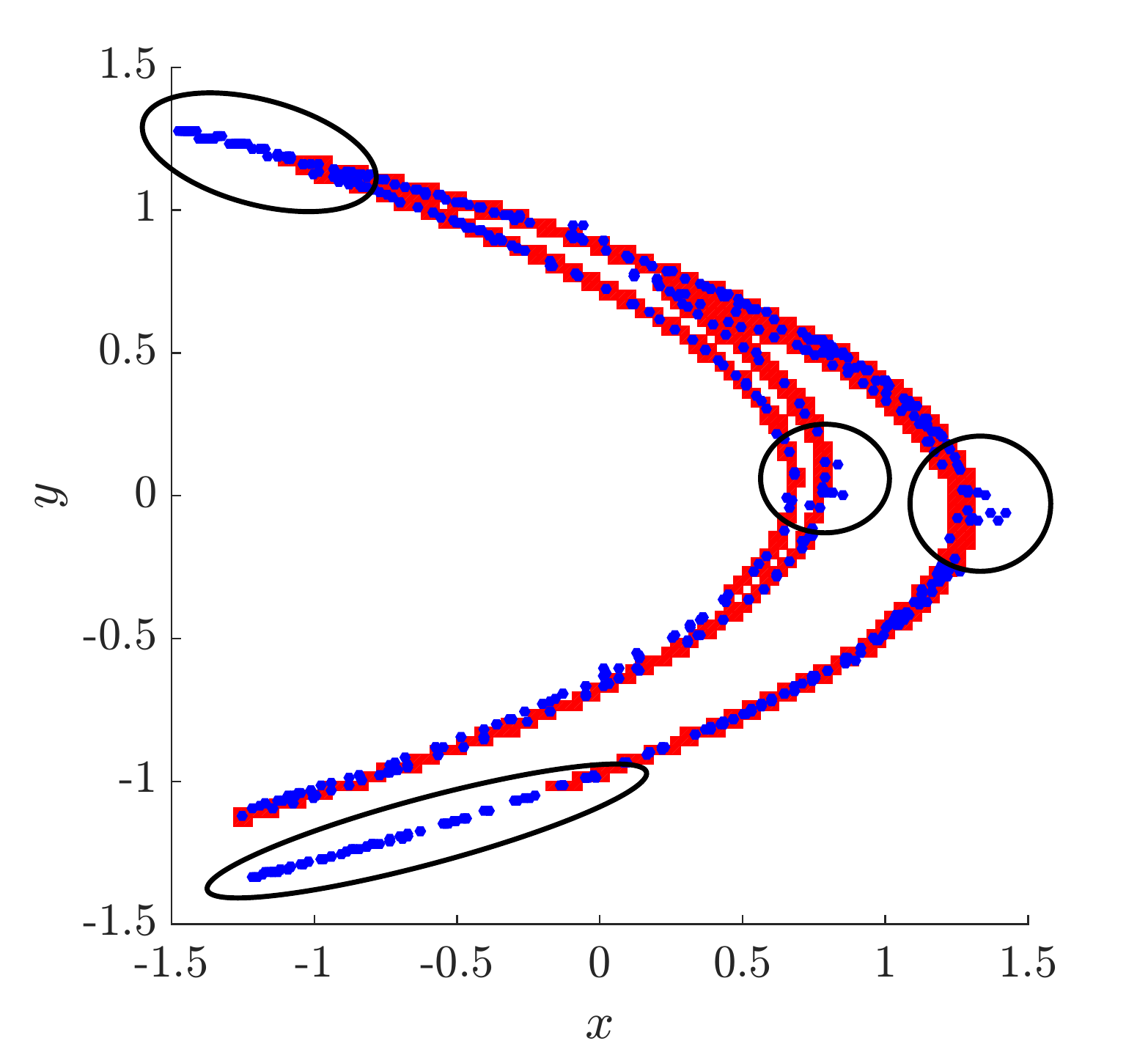}
\caption{Maximal invariant set in the H\'enon map: iterates of an initial set of 1000 randomly chosen points after 20 (left) and 200 (right) steps of the BFGS scheme.}
\label{fig:henon}
\end{center}
\end{figure} 
\end{experiment}

Figure~\ref{fig:henon} shows the iterates of $\tX$ after $20$ and $200$ steps of the optimizer.  Clearly, there appear to be regions (sourrounded by black lines) where points are converging extremely slowly. We conjecture that this is due to (near-)tangencies between stable and unstable manifolds, i.e.\ (near-)nonhyperbolic behaviour.  Note that this phenomenon does not show up near $(-1.25,-1.1)$ where the maximal invariant set is bounded by a saddle fixed point.

\begin{experiment}[A chaotic saddle in 3d]
We finally consider an example in $\R^3$ exhibiting a maximal invariant set with complicated dynamics which is unstable in both time directions.  The map is
\[
f(x,y,z) = (y, z, a+bx+cy-z^2)
\]
with $a = 1.4, b = 0.1, c = 0.3$ which is constructed in analogy to the H\'enon map.  The parameter values have experimentally been chosen such that the maximal invariant set is a saddle.  Note in particular that this set cannot be computed/observed by mere simulation in forward or backward time since the set is unstable in both time directions (this is the unicorn we are alluding to in the title). Figure shows a covering of the maximal invariant set within the cube $[-2,2]^3$ computed by \textsf{GAIO}.
\begin{figure}[H]
\begin{center}
\includegraphics[trim = 1in 3in 1in 3in, clip,width=0.5\textwidth]{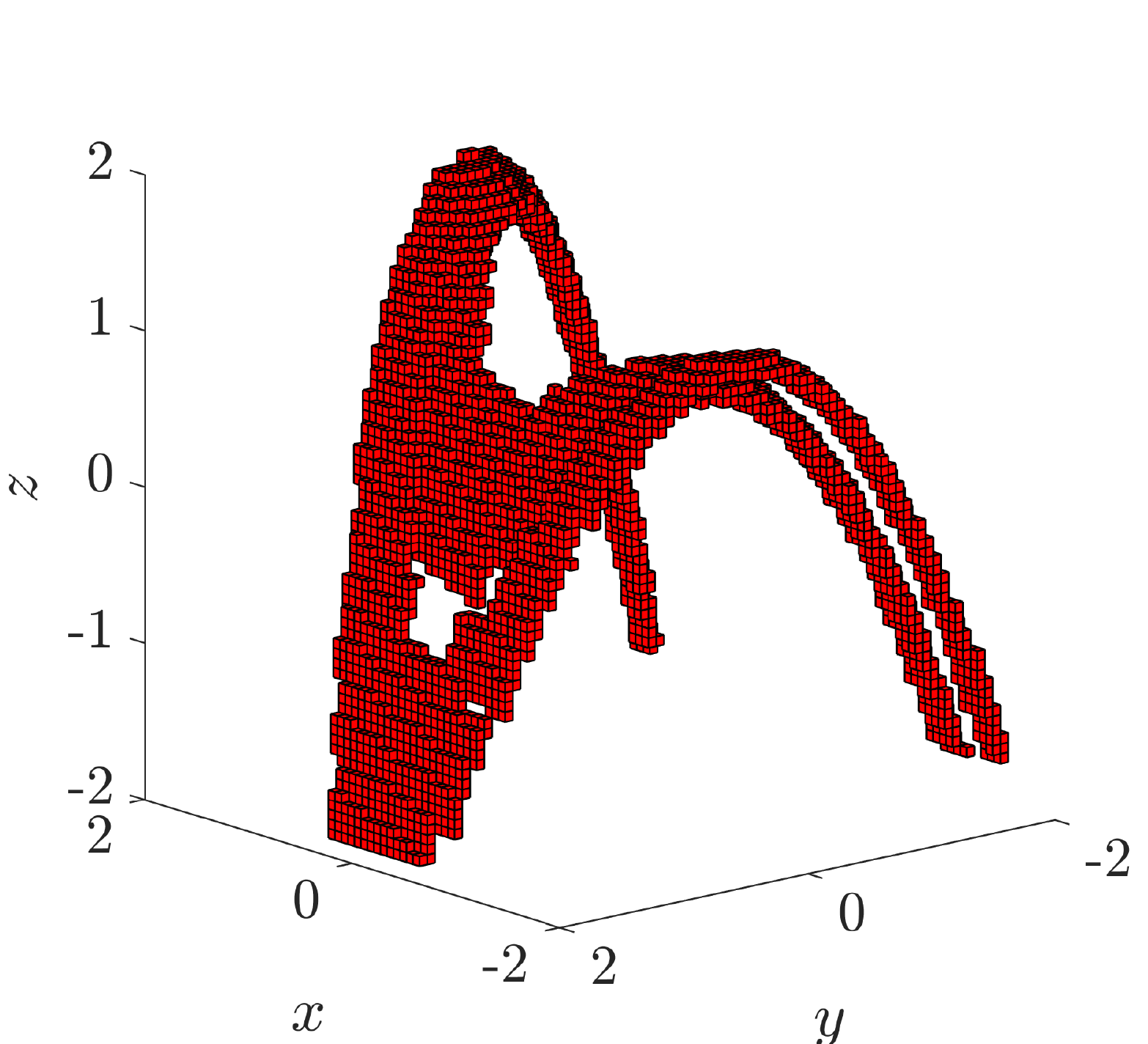}
\caption{3d chaotic saddle: box covering of the maximal invariant set.}
\label{fig:henon3d-mis}
\end{center}
\end{figure}
We initialize the optimization of $\hat E$ with a set $\tX$ of $1000$ points which have been chosen randomly from the cube $[-2,2]^3$ according to a uniform distribution. Figure~\ref{fig:henon3d-iter} shows the iterates of $\tX$ after $20$ and $200$ steps of the optimizer.  Again, we observe slow convergence in certain regions like in the 2d H\'enon example.
\begin{figure}[H]
\begin{center}
\includegraphics[trim = 1in 3in 1in 3in, clip,width=0.49\textwidth]{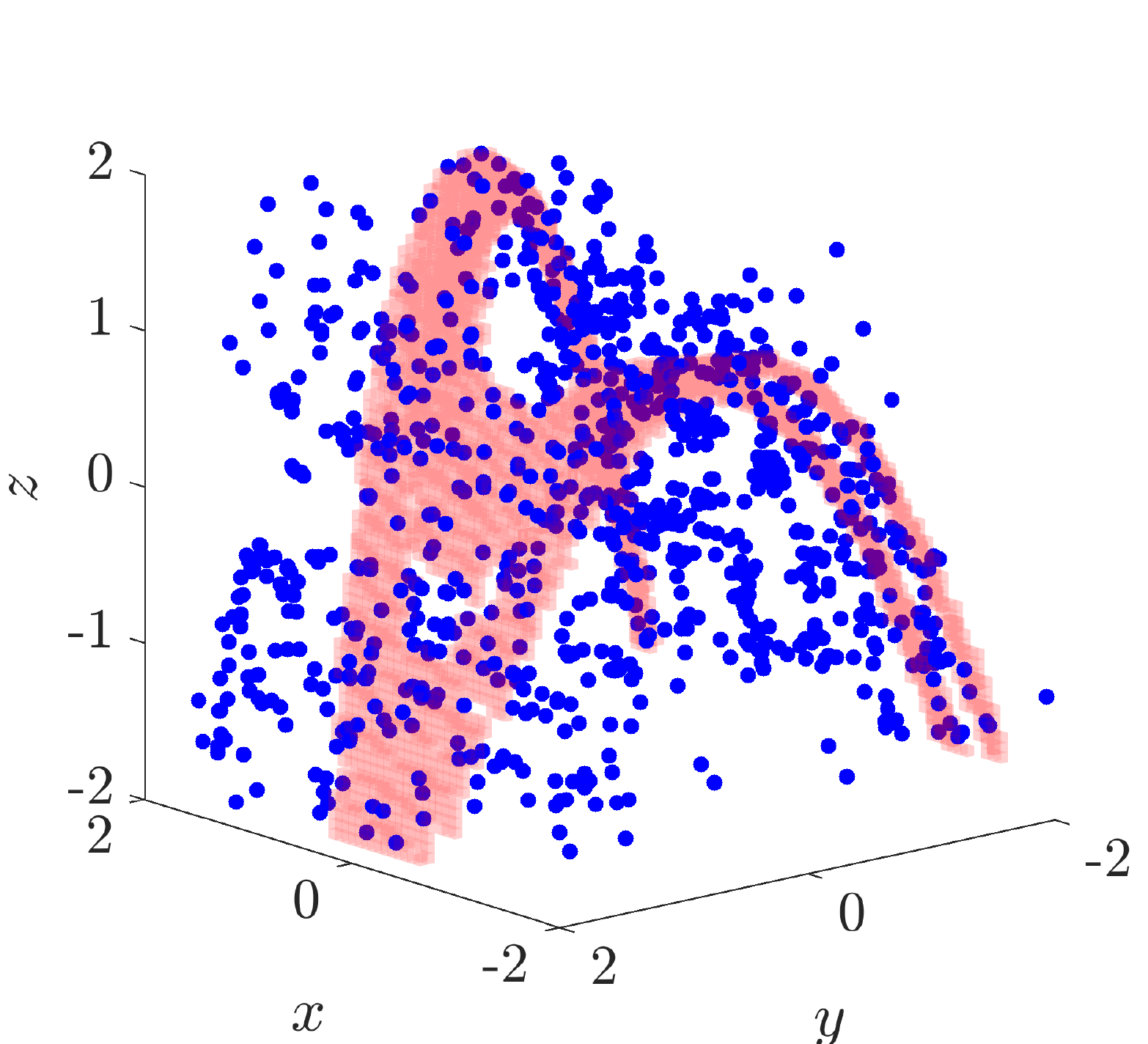}
\includegraphics[trim = 1in 3in 1in 3in, clip,width=0.49\textwidth]{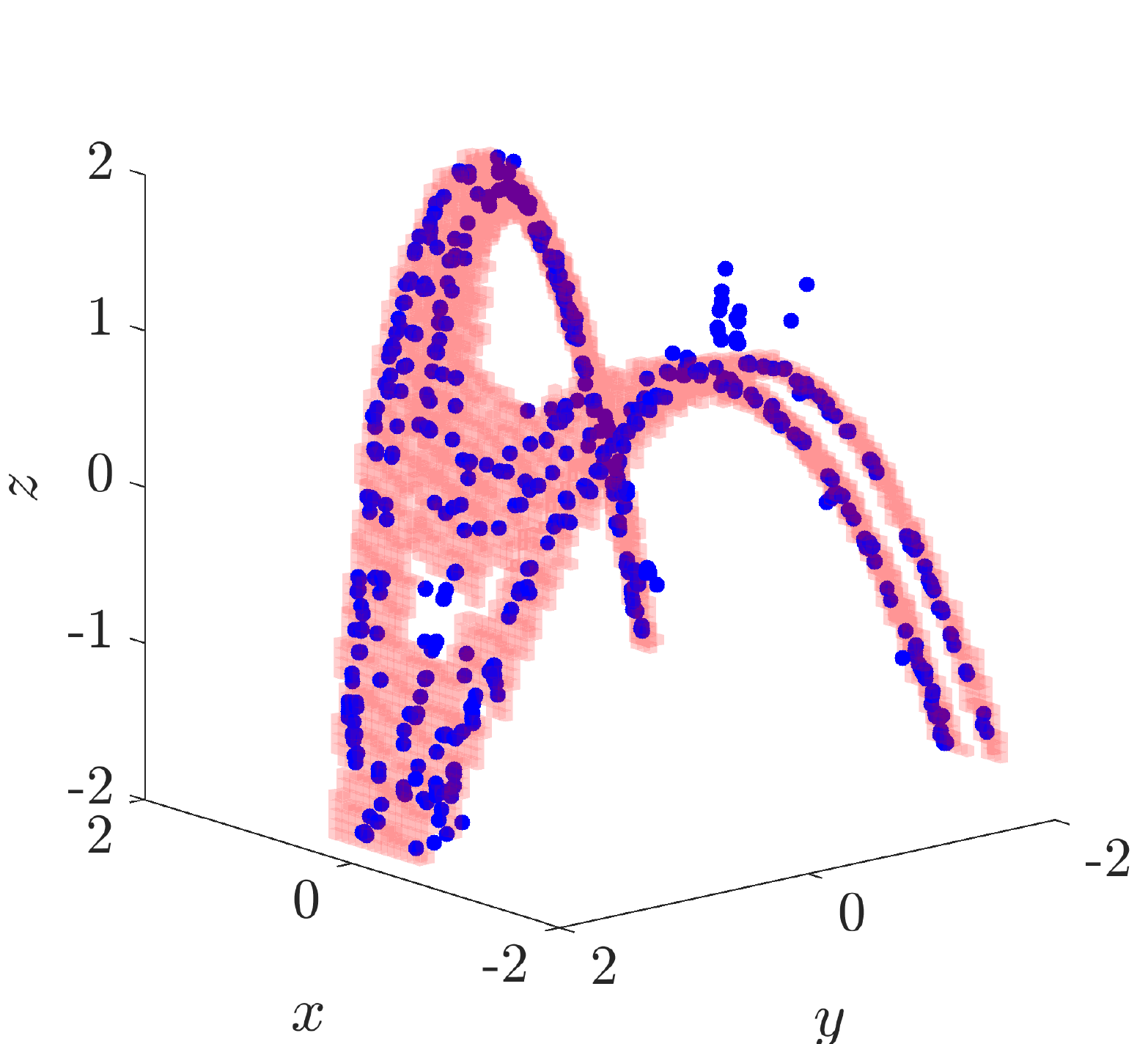}
\caption{3d chaotic saddle: iterates of an initial set of 1000 randomly chosen points after 20 (left) and 200 (right) steps of the BFGS scheme.}
\label{fig:henon3d-iter}
\end{center}
\end{figure} 
\end{experiment}

%\mytodo{Experiments on initial set: non-covering, size.}

\section{Additional potentials}
\label{sec:addpot}

While the points in $\tX$ seem to converge towards the maximal invariant set $\Inv(Q)$ in the experiments above, their distribution is typically far from uniform on it. Moreover, one seems to obtain different minimizers depending on the initialization (and also we do not identify vectors which yield the same set, i.e.\ we do not factor by the permutational symmetries of the vector). In fact, in many of the experiments points tend to cluster quite heavily in certain areas and even coincide (cf.\ Fig.~\ref{fig:circle_2d_init}).  In view of our goal to best approximate the maximal invariant set in terms of the Hausdorff distance and to ultimately obtain a unique minimizer, it would be desirable to distribute $\tX$ more uniformly.

As a first step towards this goal, we are going to add a term to the potential $\hat E$ which  strongly penalizes points in $\tX$ from getting too close and which favors them to attain a certain distance $\delta$ to each other.  This can be accomplished by a Lennard Jones potential, cf.\ \cite{Hansen:1990uv},
\[
V_\delta(r) = \left(\frac{\delta}{r}\right)^{2p} - 2\left(\frac{\delta}{r}\right)^p + 1
\]
where the exponent $p\in\N$ controls the ``rigidity'' of the potential and where $r$ is the distance between two points in $\tX$.  In the following experiments, $p=1$ seemed to work best for our purposes.  The proper distance $\delta$ ultimately depends on the dimension of $\Inv(Q)$ and the number $n$ of points in $\tX$ so that we cannot fix the value of $\delta$ a priori and we therefore include $\delta$ as an optimization variable.  One can imagine the Lennard-Jones potential to be a `soft' version of the \emph{hard sphere potential} \cite{Hansen:1990uv} and correspondigly, we here imagine the points in $\tilde X$ to be surrounded by balls of radius $\delta$.

For each point in $\tX$, we are going to restrict the evaluation of $V$ to the $m$ nearest points from $\tX$. The corresponding augmented objective function reads
\begin{align}\label{eq:vp_aug}
J(x_1,\ldots,x_n,\delta) = \hat E(x_1,\ldots,x_n) + \mu\frac{1}{n}\sum_{i=1}^n\frac{1}{m}\sum_{j\in N_m(i)} V_\delta(\|x_i-x_j\|_2),
\end{align}
where $N_m(i)$ is the set of $m$ nearest neighbours of $x_i$ and $\mu>0$ is a weighting parameter.  Larger $\mu$ will favor the points from $\tX$ to attain a lattice structure while smaller $\mu$ favors them to be close to some invariant set.

%\mytodo{Note on lower bound for delta.}

\subsection{Computational experiments}

\begin{experiment}[On the proper number $m$ of neighbors.] We reconsider Experiment \ref{exp:disk}, choose $\mu=1$ and initialize $\tX$ as a uniform grid of $n=32\times 32=1024$ points within the square $Q=[-2,2]^2$.  We initialize $\delta=\sqrt{m(Q)/(n\pi)}$, i.e.\ such that the sum of the volumes of balls centered at the points in $\tX$ with radius $\delta$ is of the same order as the volume of $Q$.  Figure~\ref{fig:disk_LJ} shows the iterates of $\tX$ after 500 steps of the BFGS scheme for $m=6$ (left) and $m=30$ (right).  The larger number of neighbors yields a much better approximation.
\begin{figure}[H]
\begin{center}
\includegraphics[trim = 1in 3in 1in 3in, clip,width=0.49\textwidth]{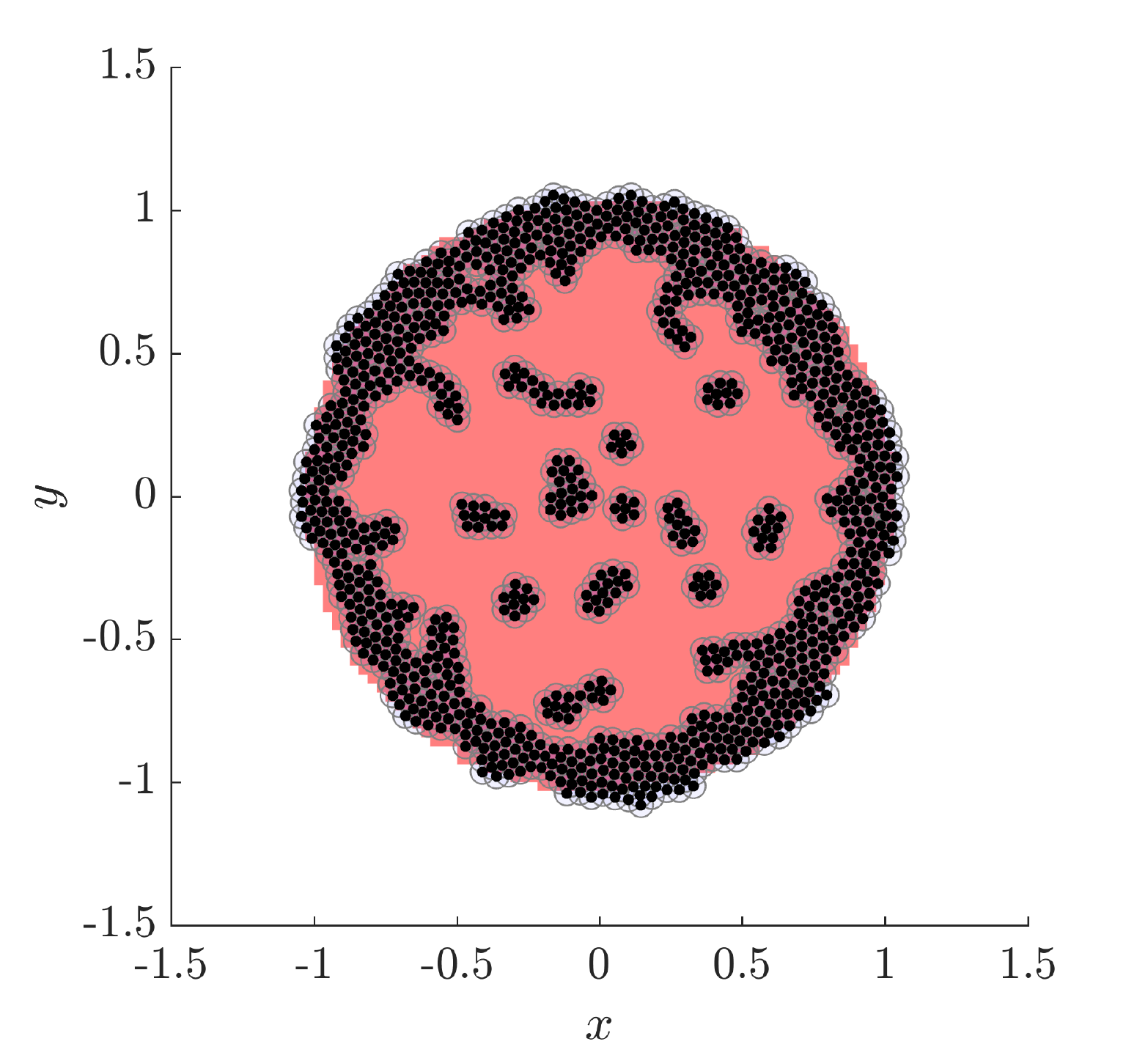}
\includegraphics[trim = 1in 3in 1in 3in, clip,width=0.49\textwidth]{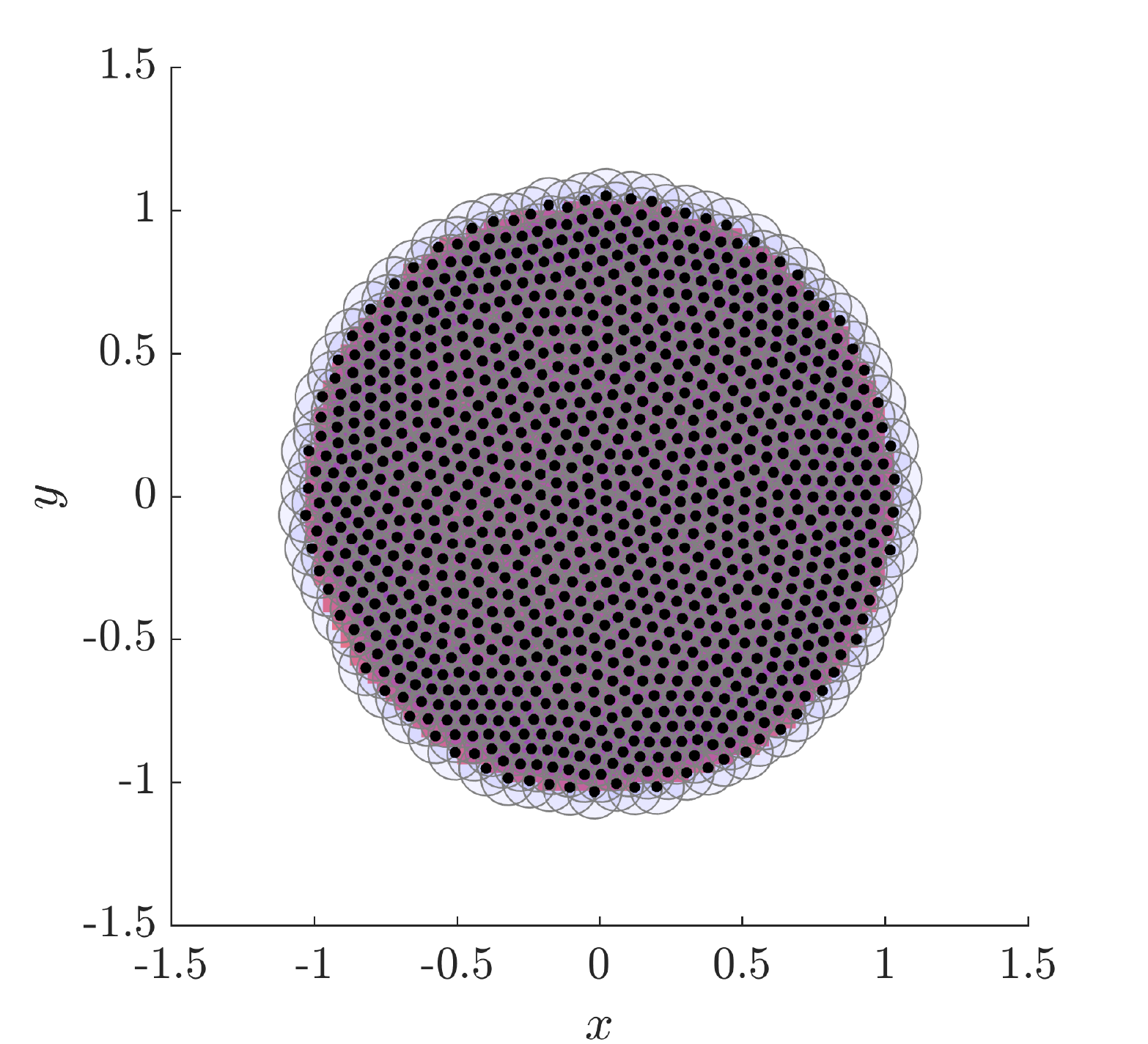}
\caption{Invariant disk, with Lennard-Jones potential: 500th iterate of the initial point cloud for $m=6$ (left) and $m=30$ (right). We show the set $\tX$ (black dots) together with surrounding balls of radius $\delta$ (where $\delta$ results from the optimization).}
\label{fig:disk_LJ}
\end{center}
\end{figure} 

Figure \ref{fig:henon_LJ} shows the results of the same experiment, albeit for the H\'enon map. While a larger number of neigbours tends to yield a more uniform covering of the maximal invariant set here as well, it also tends to hide finer structures (given a fixed number of balls).

\begin{figure}[H]
\begin{center}
\includegraphics[trim = 1in 3in 1in 3in, clip,width=0.45\textwidth]{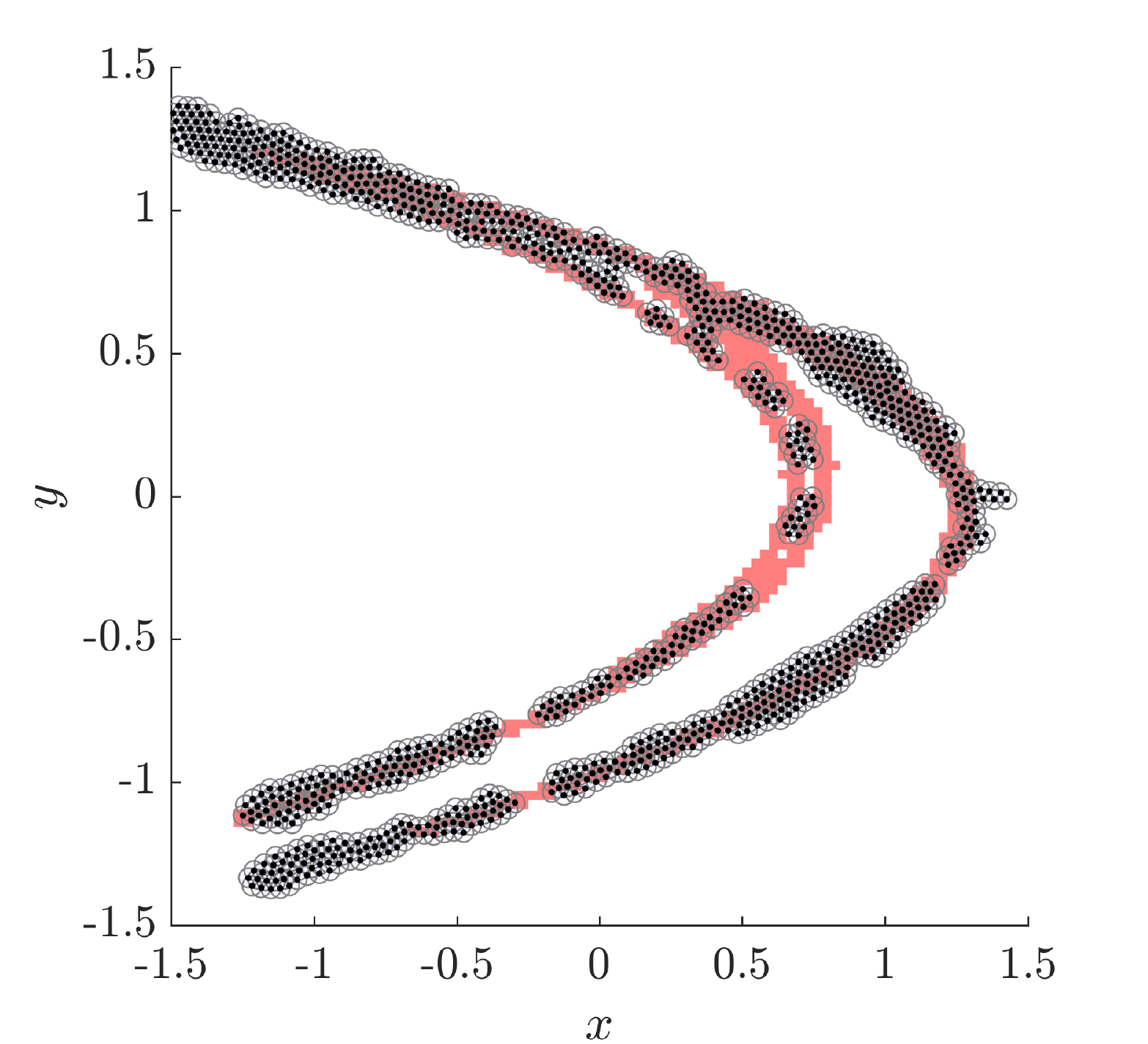}
\includegraphics[trim = 1in 3in 1in 3in, clip,width=0.45\textwidth]{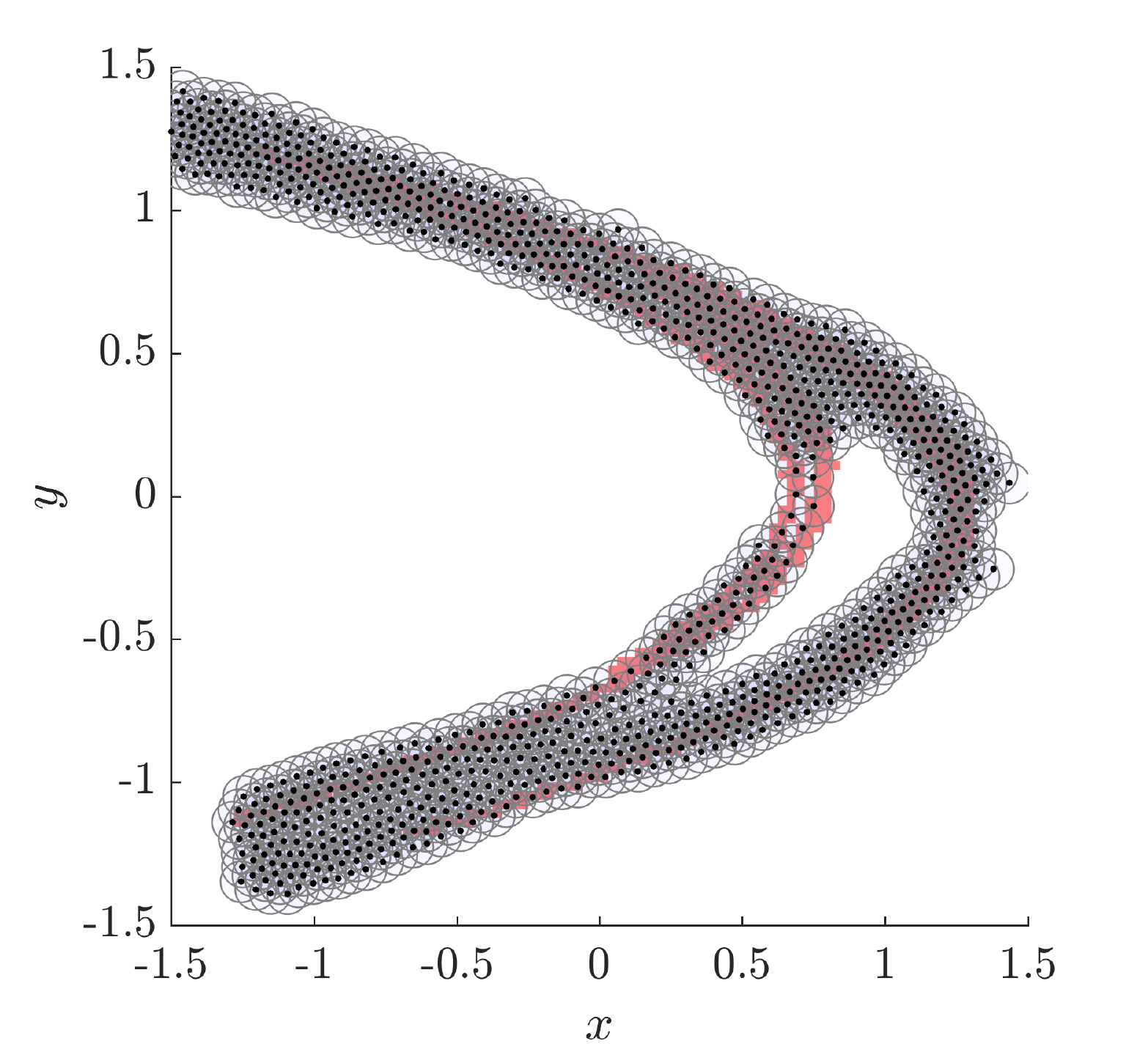}
\caption{H\'enon map, with Lennard-Jones potential: 500th iterate of the initial point cloud for $m=6$ (left) and $m=30$ (right).}
\label{fig:henon_LJ}
\end{center}
\end{figure} 
\end{experiment}

\begin{experiment}[On the choice of $\mu$.] This latter drawback can be alleviated by decreasing the value of $\mu$, i.e.\ decreasing the influence of the Lennard Jones term, as evidenced by repeating the previous experiment on the H\'enon map with $\mu=0.01$ (Fig.~\ref{fig:henon_LJ_mu}).   

\begin{figure}[H]
\begin{center}
\includegraphics[trim = 1in 3in 1in 3in, clip,width=0.5\textwidth]{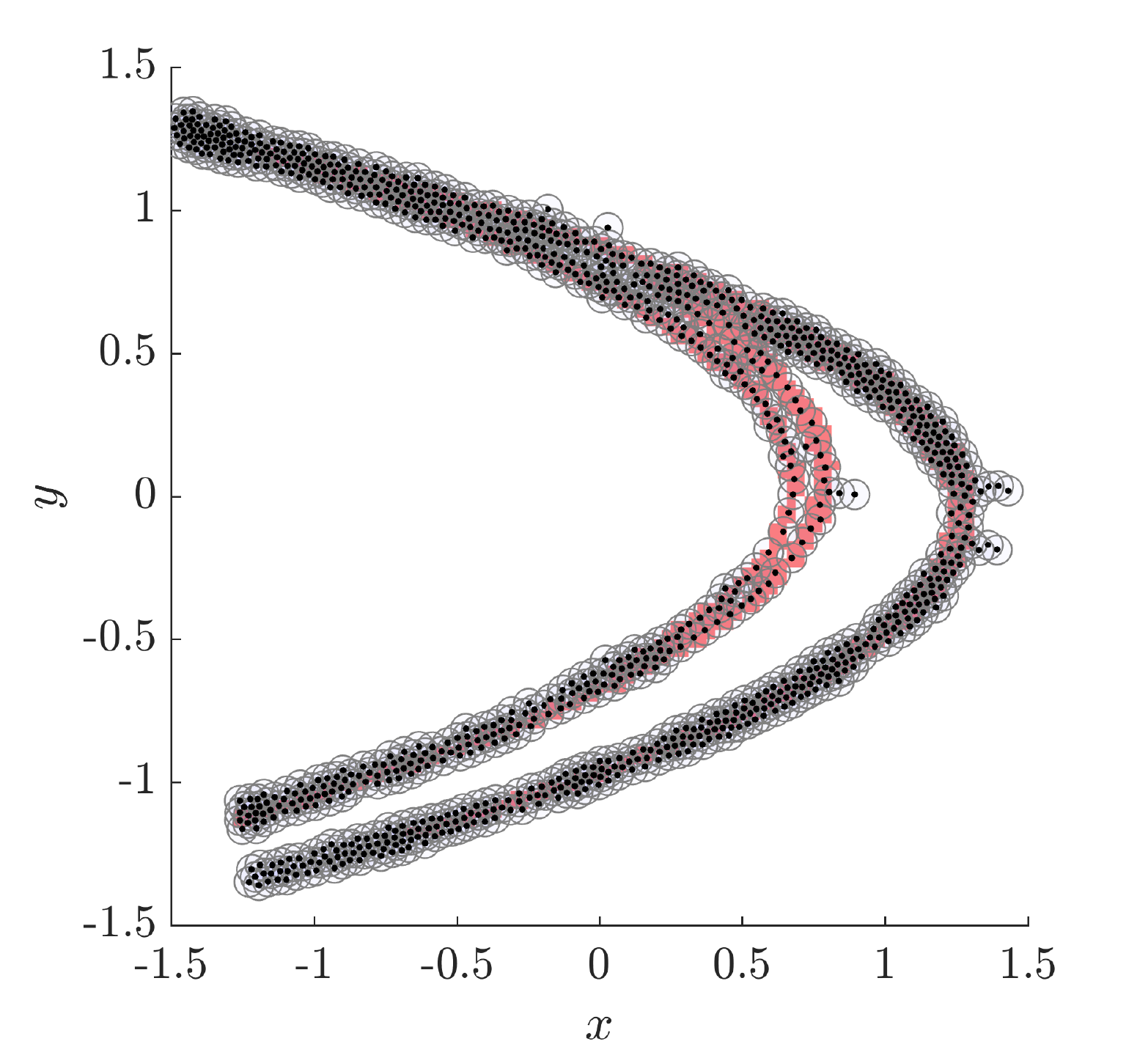}
\caption{H\'enon map, with Lennard-Jones potential: 500th iterate of the inital point cloud, $\mu=0.01$.}
\label{fig:henon_LJ_mu}
\end{center}
\end{figure} 
\end{experiment}

\section{Discussion and future directions} 

Clearly, the experiments in this paper can only be seen as a first step. Of course it would be desirable to gain insight into the general convergence behavior of the scheme, in particular as the number of points goes to infinity and this is currently under investigation.  Further, it would desirable to alleviate the bad convergence behavior in weakly hyperbolic regions.  

While the inclusion of the Lennard-Jones potential seems to point in the right direction, it also raises new issues like the proper number of neighbors and the proper value of the weighting parameter $\mu$. Of course, other potentials might be conceivable as well.  In particular, it might be useful to adapt the `radius parameter' $\delta$ locally, i.e.\ use balls of smaller radius where appropriate.  A multilevel scheme might be useful where one considers balls of several scales at the same time in the spirit of the famous
``cheese theorem'' of E.~Lieb \cite{Lieb:1972gr}.

As mentioned, in principle any metric on the set of compact subsets of $\R^d$ will do.  Our choice of a Hausdorff type distance was motivated by smoothness considerations.   A natural candidate for a different choice would be the Wasserstein or earth mover's distance (where $\tX$ is seen as a sum of atomic measures).  We will explore whether this bears any advantage over the Hausdorff type distance used here (in particular, since the numerical effort for computing the Wasserstein distance is presumably larger than for the Hausdorff type metric).

With the limited memory BFGS scheme, we used a standard quasi-Newton method for the minimization of the objective functional.  Depending on the set-metric employed, other schemes might be more beneficial, both from a theoretical point of view (in order to prove convergence) and also from a numerical efficiency point of view. 

As mentioned, one of the motivations for considering the approach advocated in this paper was to construct an approximation of some invariant set which varies smoothly in the case that the underlying invariant set varies smoothly with some system parameter.  In fact, it is an interesting question whether our approach can be embedded into a path following scheme.  

Another interesting question is how to modify the functional $E$ such that an invariant set of particular type is computed, e.g.\ is it possible to directly compute the chain recurrent set instead of the maximal invariant one.

\section{Acknowledgements}

We thank Gero Friesecke and William Leeb for helpful discussions as well as Daniel Karrasch for careful proofreading and helpful suggestions.  We also gratefully acknowledge support by Institute of Advanced Studies at the Technical University Munich.

\end{document}